\DeclareMathOperator{\IKM}{\textit{I}_{KM}}
\DeclareMathOperator{\IM}{\textit{I}_{M}}
\DeclareMathOperator{\reg}{reg}
\DeclareMathOperator{\SL}{SL}
\DeclareMathOperator{\Mp}{Mp}
\DeclareMathOperator{\Z}{\mathbb{Z}}
\DeclareMathOperator{\R}{\mathbb{R}}
\DeclareMathOperator{\C}{\mathbb{C}}
\DeclareMathOperator{\Q}{\mathbb{Q}}
\renewcommand{\H}{\mathbb{H}}
\DeclareMathOperator{\Aut}{Aut}
\DeclareMathOperator{\tr}{tr}
\DeclareMathOperator{\calQ}{\mathcal{Q}}
\DeclareMathOperator{\e}{\mathfrak{e}}
	\newtheorem{Satz}{Satz}[section]
	\newtheorem{Theorem}[Satz]{Theorem}
	\newtheorem{Lemma}[Satz]{Lemma}
	\newtheorem{Corollary}[Satz]{Corollary}
	\theoremstyle{definition} 
	\newtheorem{Example}[Satz]{Example}
	\newtheorem{Remark}[Satz]{Remark}
\date{\today}
\author{Jan Hendrik Bruinier and Markus Schwagenscheidt}
\title[Algebraic formulas for mock theta functions]{Algebraic formulas for the coefficients of mock theta functions and Weyl vectors of Borcherds products}
\address{Fachbereich Mathematik, Technische Universit\"at Darmstadt, Schlossgartenstra{\ss}e 7, D--64289 Darmstadt, Germany}
\email{bruinier@mathematik.tu-darmstadt.de}
\email{schwagenscheidt@mathematik.tu-darmstadt.de}
\thanks{The authors are partially supported by the DFG Research Unit FOR 1920 \lq Symmetry, Geometry and Arithmetic\rq.
}
\begin{document}

\begin{abstract}
We present some applications of the Kudla-Millson and the Millson theta lift. The two lifts map weakly holomorphic modular functions to vector valued harmonic Maass forms of weight $3/2$ and $1/2$, respectively. We give finite algebraic formulas for the coefficients of Ramanujan's mock theta functions $f(q)$ and $\omega(q)$ in terms of traces of CM-values of a weakly holomorphic modular function. Further, we construct vector valued harmonic Maass forms whose shadows are unary theta functions, and whose holomorphic parts have rational coefficients. This yields a rationality result for the coefficients of mock theta functions, i.e., harmonic Maass forms whose shadows lie in the space of unary theta functions. Moreover, the harmonic Maass forms we construct can be used to evaluate the Petersson inner products of unary theta functions with harmonic Maass forms, giving formulas and rationality results for the Weyl vectors of Borcherds products.
\end{abstract}

\maketitle

	\section{Introduction}
	
	Over the last two decades, starting with the fundamental work of Borcherds \cite{Borcherds}, theta lifts between spaces of integral and half-integral weight weakly holomorphic modular forms have become a powerful tool in number theory. For example, Funke and the first author \cite{BruinierFunke06} used the Kudla-Millson theta lift from weight $0$ to weight $3/2$ harmonic Maass forms to give a new proof and generalizations of Zagier's \cite{ZagierTraces} famous result on the modularity of the generating series of traces of singular moduli. Further, in \cite{BruinierOnoAlgebraicFormulas}, Ono and the first author used a variant of the Kudla-Millson theta lift to find a finite algebraic formula for the partition function $p(n)$ in terms of traces of CM-values of a certain non-holomorphic modular function. Recently, a similar theta lift was used in \cite{AGOR} to prove a refinement of a theorem of \cite{BruinierOnoHeegnerDivisors} connecting the vanishing of the central derivative of the twisted $L$-function of an even weight newform and the rationality of some coefficient of the holomorphic part of a half-integral weight harmonic Maass form. This so-called Millson theta lift, which maps weight $0$ to weight $1/2$ harmonic Maass forms, was studied in great detail by Alfes-Neumann in her thesis \cite{Alfesdiss}, and by Alfes-Neumann and the second author in \cite{AlfesSchwagenscheidt}. The aim of this paper is to present some applications of the Kudla-Millson lift from \cite{BruinierFunke06} and the Millson lift studied in \cite{AlfesSchwagenscheidt}. We discuss the following three applications.
	\subsection*{Algebraic formulas for Ramanujan's mock theta functions} We give finite algebraic formulas for the coefficients of Ramanujan's order $3$ mock theta functions $f(q)$ and $\omega(q)$ in terms of traces of CM-values of a weakly holomorphic modular function (see Theorem~\ref{MockThetaFormulas}). For example, we show that the coefficients $a_{f}(n), n \geq 1$, of Ramanujan's mock theta function
		\begin{align}\label{Ramanujanf}
		f(q) &= 1+\sum_{n=1}^{\infty}\frac{q^{n^{2}}}{(1+q)^{2}(1+q^{2})^{2}\cdots (1+q^{n})^{2}} = 1 + \sum_{n=1}^{\infty}a_{f}(n)q^{n}
		\end{align}
		 are given by
		\begin{align*}
		a_{f}(n) = -\frac{1}{\sqrt{24n-1}}\Im\bigg(\sum_{Q \in \mathcal{Q}_{n}}\frac{F(z_{Q})}{\omega_{Q}}\bigg),
		\end{align*}
		where 
		\begin{align}\label{InputF}
		F(z) = -\frac{1}{40}\cdot\frac{E_{4}(z) + 4E_{4}(2z) - 9E_{4}(3z) - 36E_{4}(6z)}{(\eta(z)\eta(2z)\eta(3z)\eta(6z))^{2}} = q^{-1}-4-83q-296q^{2}+ \dots
		\end{align}
		is a $\Gamma_{0}(6)$-invariant weakly holomorphic modular function, $\mathcal{Q}_{n}$ is the (finite) set of $\Gamma_{0}(6)$-equivalence classes of positive definite integral binary quadratic forms $Q(x,y) = ax^{2}+ bxy+cy^{2}$ of discriminant $1-24n$ with $6 \mid a$ and $b \equiv 1(12)$, $z_{Q} \in \H$ is the CM-point characterized by $Q(z_{Q},1) = 0$, and $\omega_{Q}$ is half the order of the stabilizer of $Q$ in $\Gamma_{0}(6)$. Moreover, $E_{4}$ denotes the normalized Eisenstein series of weight $4$ for $\SL_{2}(\Z)$ and $\eta = q^{1/24}\prod_{n = 1}^{\infty}(1-q^{n})$ is the Dedekind eta function. 
		
		For the proof, we use Zwegers' \cite{Zwegers} realization of Ramanujan's mock theta functions as the holomorphic parts of vector valued harmonic Maass forms of weight $1/2$, then construct the corresponding harmonic Maass form as the Millson lift of $F$, and finally obtain the formula by comparing Fourier coefficients.
		\subsection*{Rationality results for harmonic Maass forms} By applying the Kudla-Millson and the Millson theta lifts to a suitable weakly holomorphic input function, we construct harmonic Maass forms of weight $3/2$ and $1/2$ whose images under the differential operator $\xi_{k} = 2iv^{k}\overline{\frac{\partial}{\partial \bar{\tau}}}$ are vector valued unary theta functions of weight $1/2$ and $3/2$, and whose holomorphic parts (which are mock modular forms) are given by traces of CM-values of the input function (see Theorem~\ref{ThetaLifts}). This implies that these mock modular forms have rational coefficients (see Theorem \ref{RationalHolomorphicPart}), which in turn yields a rationality result for the holomorphic parts of harmonic Maass forms that map to the space of unary theta functions under $\xi$ (see Theorem~\ref{AlgebraicCoefficients}). 
		
		More specifically, we show that if $f$ is a vector valued harmonic Maass form of weight $1/2$ whose principal part is defined over a number field $K$, and whose shadow lies in the space of unary theta functions, then all coefficients of the holomorphic part of $f$ lie in $K$. This contrasts a conjecture of Ono and the first author \cite{BruinierOnoHeegnerDivisors}, stating that if $f$ is a harmonic Maass form of weight $1/2$ whose shadow is orthogonal to the space of unary theta functions, then all but a set of density $0$ of the non-vanishing coefficients of the holomorphic part of $f$ should be transcendental. 
		\subsection*{Inner product formulas and Weyl vectors of Borcherds products} We use our $\xi$-preimages to evaluate the regularized Petersson inner product of a harmonic Maass form $f$ and a unary theta function of weight $1/2$ (see Theorem~\ref{InnerProductFormulas}), and apply this to compute the Weyl vectors of the Borcherds lift of $f$ (see Corollary~\ref{WeylVectors}). For example, for $N = 1$ the Borcherds product associated with a weakly holomorphic modular form $f = \sum_{n \gg -\infty}c_{f}(n)q^{n}$ of weight $1/2$ for $\Gamma_{0}(4)$ in the Kohnen plus space with rational coefficients and integral principal part is given by
		\[
		\Psi(z,f) = q^{\rho_{f}}\prod_{n=1}^{\infty}(1-q^{n})^{c_{f}(n^{2})}, \qquad (q = e^{2\pi i z}),
		\] 
		where $\rho_{f}$ is the so-called Weyl vector of $f$. The product $\Psi(z,f)$ is a modular form of weight $c_{f}(0)$ for $\SL_{2}(\Z)$, whose divisor on $\H$ is a Heegner divisor. For higher level $N$, the orders of $\Psi(z,f)$ at the cusps of $\Gamma_{0}(N)$ are determined by Weyl vectors associated to the cusps. These vectors are essentially given by regularized inner products of $f$ with a unary theta function of weight $1/2$, and can be explicitly evaluated in terms of the coefficients of the holomorphic part of $f$ and the coefficients of the holomorphic part of a $\xi$-primage of the unary theta function. In particular, we show that all Weyl vectors associated to a harmonic Maass form with rational holomorphic coefficients are rational (see Corollary \ref{WeylVectorsRationality}).
	\\~
	
	We start with a section on the necessary definitions and notations, and then discuss each of the three applications in a separate section.

		The starting point of this work was a talk given by Yingkun Li in march 2016 about his construction of real-analytic theta series of weight $1/2$. We thank him cordially for his inspiration and for helpful discussions.

	\section{Preliminaries}

		We first need to set up some background material about unary theta functions, harmonic Maass forms, and traces of CM-values of modular functions. Throughout this work, we let $N$ be a positive integer. 
		
		\subsection{Unary theta function for the Weil representation} We consider the positive definite lattice $\Z$ with the quadratic form $n\mapsto Nn^{2}$. Its dual lattice is $\frac{1}{2N}\Z$, so its discriminant group can be identified with $\Z/2N\Z$, equipped with the finite quadratic form ${r \mapsto r^{2}/4N \text{ mod } \Z}$. For $r \in \Z/2N\Z$ we let $\e_{r}$ be the standard basis vectors of the group ring $\C[\Z/2N\Z]$, and we let $\langle \cdot,\cdot \rangle$ be the standard inner product on $\C[\Z/2N\Z]$ which satisfies $\langle \e_{r},\e_{r'}\rangle = \delta_{r,r'}$ and is antilinear in the second variable. The associated Weil representation is defined on the generators $T= \left(\left(\begin{smallmatrix}1 & 1 \\0  & 1 \end{smallmatrix}\right),1\right)$ and $S = \left(\left(\begin{smallmatrix}0 & -1 \\1  & 0 \end{smallmatrix}\right),\sqrt{\tau}\right)$ of the metaplectic group $\Mp_{2}(\Z)$ by
		\begin{align*}
		\rho(T)\e_{r} &= e(Q(r))\e_{r}, \qquad  \rho(S)\e_{r} = \frac{e(-1/8)}{\sqrt{2N}}\sum_{r' (2N)}e(-(r,r'))\e_{r'},
		\end{align*}
		where $e(z) = e^{2\pi i z}$ for $z\in \C$. We consider the unary theta functions
		\begin{align}\label{UnaryTheta12}
		\theta_{1/2}(\tau) = \sum_{r(2N)}\sum_{\substack{b \in \Z \\ b \equiv r(2N)}}q^{b^{2}/4N}\e_{r} \qquad \text{and} \qquad 		\theta_{3/2}(\tau) = \sum_{r(2N)}\sum_{\substack{b \in \Z \\ b \equiv r(2N)}}bq^{b^{2}/4N}\e_{r},
		\end{align}
		where $q = e^{2\pi i \tau}$. They are holomorphic vector valued modular forms of weight $1/2$ and $3/2$ for $\rho$, as can be seen using \cite{Borcherds}, Theorem~4.1, for example. From a more conceptual point of view, these are the theta functions associated to the archimedian Schwartz functions $e^{-\pi x^{2}}$ and $x e^{-\pi x^{2}}$ in the Schr\"odinger model of the Weil representation associated with a one-dimensional quadratic space, compare \cite{BruinierFunke04}, Section 2. We sometimes write $\rho_{N}, \theta_{1/2,N}$ and $\theta_{3/2,N}$ if we want to emphasize the dependence on $N$.
		
		Note that for $k \in 1/2+\Z$ the space $M_{k,\rho}$ of holomorphic modular forms of weight $k$ for $\rho$ is isomorphic to the space $J_{k+1/2,N}^{*}$ of skew-holomorphic Jacobi forms of weight $k+1/2$ and index $N$, and $M_{k,\bar{\rho}}$ is isomorphic to the space $J_{k+1/2,N}$ of holomorphic Jacobi forms of weight $k+1/2$ and index $N$ (see \cite{EichlerZagier}, Section~5). In particular, one can view $\theta_{1/2}$ and $\theta_{3/2}$ as skew-holomorphic Jacobi forms of weight $1$ and $2$. Further, we will make frequent use of a result by Skoruppa which asserts that $M_{1/2,\bar{\rho}} \cong J_{1,N} = \{0\}$ for all $N$ (see \cite{EichlerZagier}, Theorem~5.7). 
		
		Besides the unary theta functions $\theta_{1/2}$ and $\theta_{3/2}$ themselves, we also want to regard modular forms which arise from these series by applications of certain simple operators as theta functions. There are two classes of such operators, which we describe now.
		
		The automorphism group $\Aut(\Z/2N\Z)$ acts on vector valued modular forms $f = \sum_{r}f_{r}\e_{r}$ for $\rho$ or $\bar{\rho}$ by $f^{\sigma} = \sum_{r}f_{r}\e_{\sigma(r)}$. The elements of $\Aut(\Z/2N\Z)$ are all involutions, also called \emph{Atkin-Lehner involutions}, and correspond to the exact divisors $c\mid \mid N$ (i.e., $c\mid N$ and $(c,N/c) =1$). The automorphism $\sigma_{c}$ corresponding to $c$ is defined by the equations
	\begin{align}\label{AtkinLehnerInvolution}
	\sigma_{c}(r) \equiv -r \ (2c) \quad \text{and} \quad \sigma_{c}(r) \equiv r \ (2N/c)
	\end{align}
	for $r \in \Z/2N\Z$, compare \cite{EichlerZagier}, Theorem~5.2. Note that the Atkin-Lehner involutions only permute the components of a vector valued modular form.
	
	For each positive integer $d$ there is a natural operator which maps modular forms for $\rho_{N}$ to forms for $\rho_{Nd^{2}}$ (see \cite{ScheithauerModularforms}, Section 4). Under the isomorphism $J_{k+1/2,N}^{*}\cong M_{k,\rho_{N}}$ it corresponds to the index raising operator $U_{d}$ defined in \cite{EichlerZagier}, Section 4. Its action on the Fourier expansion of a holomorphic modular form $f = \sum_{r(2N) }\sum_{\substack{D \equiv r^{2}(4N)} }c_{f}(D,r)e\left( \frac{D}{4N}\tau\right)\e_{r}$ for $\rho_{N}$ is given by
	\begin{align}
	f|U_{d} &= \sum_{\substack{r(2Nd^{2}) \\ r \equiv 0(d)}}\sum_{\substack{D \geq 0 \\ D \equiv r^{2} (4Nd^{2})} }c_{f}(D/d^{2},r/d)e\left( \frac{D}{4Nd^{2}}\tau\right)\e_{r}. \label{UdOperator}
	\end{align}
	In particular, $U_{d}$ only distributes the components of $f$ in a certain way, but does not change the set of Fourier coefficients of $f$.
	
	 With our application to Weyl vectors of Borcherds products already in mind (compare \eqref{WeylVectorAtCusp}), we now define the \emph{space of unary theta functions of weight $1/2$ for} $\rho_{N}$ as
	\[
	\sum_{\substack{d^{2} \mid N}}\sum_{c \mid \mid \frac{N}{d^{2}}}\C\theta_{1/2,N/d^{2}}^{\sigma_{c}}|U_{d}\, .
	\]
	The space of unary theta functions of weight $3/2$ for $\rho_{N}$ is defined analogously. We now show that the space of unary theta functions of weight $1/2$ agrees with the whole space $M_{1/2,\rho}$.
	
	\begin{Lemma}
		Let $\mathcal{D}(N)$ be the set of all positive divisors of $N$ modulo the equivalence relation $c \sim N/c$. Then the theta functions
		\[
		\theta_{1/2,N/(c,N/c)^{2}}^{\sigma_{c/(c,N/c)}}|U_{(c,N/c)}, \quad c \in \mathcal{D}(N),
		\]
		form a basis of $M_{1/2,\rho}$.
	\end{Lemma}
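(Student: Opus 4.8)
The plan is to establish two things: that the functions listed in the statement span $M_{1/2,\rho}$, and that they are linearly independent. I would begin with the combinatorial reduction. First record the elementary symmetry $\theta_{1/2,M}^{\sigma_M}=\theta_{1/2,M}$: by \eqref{AtkinLehnerInvolution} the Atkin--Lehner involution attached to $c=M$ is $\sigma_M=-\mathrm{id}$, and the $r$-th and $(-r)$-th components of $\theta_{1/2,M}$ coincide because summing $q^{b^2/4M}$ over $b\equiv r$ and over $b\equiv -r\ (2M)$ gives the same series. Since the involutions satisfy $\sigma_c\sigma_M=\sigma_{M/c}$, this yields $\theta_{1/2,M}^{\sigma_c}=\theta_{1/2,M}^{\sigma_{M/c}}$ for every $c\mid\mid M$. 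Next I would check, prime by prime, that $c\mapsto\bigl((c,N/c),\,c/(c,N/c)\bigr)$ is a bijection from the divisors of $N$ onto the pairs $(d,c')$ with $d^2\mid N$ and $c'\mid\mid N/d^2$, under which the equivalence $c\sim N/c$ corresponds to $c'\sim (N/d^2)/c'$. Combined with the symmetry above, this shows that the functions indexed by $\mathcal{D}(N)$ in the statement form exactly a set of representatives for the spanning set $\sum_{d^2\mid N}\sum_{c\mid\mid N/d^2}\C\,\theta_{1/2,N/d^2}^{\sigma_c}|U_d$ in the definition of the space of unary theta functions. Hence it remains to show that this space equals $M_{1/2,\rho}$ and that the representatives are linearly independent.

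For the spanning statement I would invoke a Serre--Stark type basis theorem: holomorphic modular forms of weight $1/2$ admit no cusp forms and are spanned by unary theta series. Concretely, passing through the theory of skew-holomorphic Jacobi forms of weight one (equivalently, scalar modular forms of weight $1/2$ on $\Gamma_0(4N)$) via the standard dictionary, every element of $M_{1/2,\rho_N}$ becomes a linear combination of theta series $\theta_{\psi,t}(\tau)=\sum_n\psi(n)q^{tn^2}$ with $\psi$ an even primitive character. Reassembling components, each such $\theta_{\psi,t}$ is one of the unary theta functions $\theta_{1/2,N/d^2}^{\sigma_c}|U_d$: the index-raising operator $U_d$ accounts for the parameter $t$ (it rescales $\tau$ by $d^2$), while the Atkin--Lehner permutation $\sigma_c$ together with the choice of residue encodes the character $\psi$. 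This identifies $M_{1/2,\rho_N}$ with the space of unary theta functions.

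For linear independence, the cleanest route is to make the previous identification bijective: since the Serre--Stark theta series with distinct parameters $(\psi,t)$ are linearly independent, matching $c\in\mathcal{D}(N)$ bijectively with the occurring pairs $(\psi,t)$ gives independence for free. If one prefers an intrinsic argument, I would proceed in two steps. For a fixed $d$, writing $M=N/d^2$, the operator $U_d$ is injective by \eqref{UdOperator}, so it suffices to show the $\theta_{1/2,M}^{\sigma_{c'}}$ (over $c'$ modulo $c'\sim M/c'$) are independent; inspecting the $\e_1$-component, where $\theta_{1/2,M}^{\sigma_{c'}}$ reduces to the single series $\sum_{b\equiv\sigma_{c'}(1)\ (2M)}q^{b^2/4M}$, one checks that the $\sigma_{c'}(1)$ lie in pairwise distinct classes modulo $\pm1$ (because $\sigma_e(1)=\pm1$ forces $e\in\{1,M\}$), and these elementary theta series are linearly independent as they have distinct leading exponents. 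The main obstacle is the second step, disentangling the different $U_d$-levels: the functions $\theta_{1/2,M}^{\sigma_{c'}}|U_d$ have Fourier coefficients supported only on components $\e_r$ with $d\mid r$ and on exponents $D/(4N)$ with $D=(db)^2$, so one must organize a triangular induction along the divisibility of the levels $d$ to separate them. This is exactly the step the bijection with the Serre--Stark basis sidesteps, since there the level $d$ corresponds to the well-separated parameter $t$; accordingly I would present the Serre--Stark matching as the main argument and treat the intrinsic computation as a cross-check.
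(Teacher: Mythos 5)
Your combinatorial reduction (the symmetry $\theta_{1/2,M}^{\sigma_{M/c}}=\theta_{1/2,M}^{\sigma_c}$ and the bijection $c\mapsto((c,N/c),c/(c,N/c))$) is correct and is a useful clarification of why the index set $\mathcal{D}(N)$ exhausts the spanning set in the definition of the space of unary theta functions. However, the two load-bearing steps are not actually carried out, and this is a genuine gap rather than a stylistic difference. For spanning, you appeal to Serre--Stark, but the passage from the vector valued setting to scalar forms and back is precisely the hard part: Serre--Stark expresses each scalar component as a combination of series $\theta_{\psi,t}$, and your assertion that ``each such $\theta_{\psi,t}$ is one of the unary theta functions $\theta_{1/2,N/d^2}^{\sigma_c}|U_d$'' is not literally true --- the components of the vector valued theta functions are partial sums over residue classes $b\equiv r\ (2M)$, which for $(r,2M)>1$ are not of the form $\theta_{\psi,t}$, and conversely a $\theta_{\psi,t}$ with nontrivial $\psi$ is a character-weighted combination of several components. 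One would have to show that the only $\rho_N$-covariant ways of assembling the scalar theta series are the listed vector valued ones; you assert this rather than prove it. (Also, your parenthetical claim that weight $1/2$ admits no cusp forms is false in general: $\theta_{\psi,t}$ with $\psi$ nontrivial even is a cusp form, e.g.\ $\eta(24\tau)$.) For linear independence you explicitly acknowledge that the intrinsic argument separating the different $U_d$-levels is an ``obstacle'' you do not resolve, and you fall back on the same unproved Serre--Stark matching.

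For comparison, the paper's proof avoids both difficulties: it quotes the dimension formula for $J_{1,N}^{*}\cong M_{1/2,\rho}$ from Skoruppa--Zagier, observes that this dimension equals $|\mathcal{D}(N)|$, and then checks linear independence directly by inspecting the constant terms and the coefficients at $q^{d^{2}/4N}\e_{d}$ for $d\in\mathcal{D}(N)$. With the count in hand, independence alone gives the basis, so no spanning argument via Serre--Stark is needed. If you want to salvage your approach, the honest route is either to import the dimension formula as the paper does, or to genuinely prove the reassembly step; as written, the proposal reduces the lemma to an equivalent unproved statement.
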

		
	\begin{proof}
		Using a dimension formula for $J_{1,N}^{*} \cong M_{1/2,\rho}$ (see \cite{SkoruppaZagier}, p. 130), we see that the number of given theta functions agrees with the dimension of $M_{1/2,\rho}$. On the other hand, by looking at the constant terms and the coefficients at $q^{d^{2}/4N}\e_{d}$ for $d \in \mathcal{D}(N)$, it is easy to see that the given functions are linearly independent.
	\end{proof}
	
	\begin{Remark}\label{SerreStarkRemark} This result resembles the Serre-Stark theorem (see \cite{SerreStark}) which states that every scalar valued modular form of weight $1/2$ is a linear combination of elementary theta functions $\theta_{\psi,m}(\tau) = \sum_{n \in \Z}\psi(n)n^{\nu}q^{mn^{2}}$, where $\nu = 0$ and $\psi$ is a primitive Dirichlet character with $\psi(-1) = 1$ (for $\nu = 1$ and $\psi(-1)= -1$ the theta function $\theta_{\psi,m}$ is a cusp form of weight $3/2$). If we split $\theta_{\psi,m}(\tau) = \sum_{r (2d)}\psi(r)\sum_{n \equiv r(2d)}n^{\nu}q^{mn^{2}}$, where $d$ is the conductor of $\psi$, we see that the inner sums are just the components of a vector valued unary theta function. Hence, many of the results of the present work, which are formulated for vector valued unary theta functions, immediatly imply the corresponding statements for the classical scalar valued theta functions.
	\end{Remark}
 
 \subsection{Harmonic Maass forms} Recall from \cite{BruinierFunke04} that a smooth function $f: \H \to \C[\Z/2N\Z]$ is called a \emph{harmonic Maass form} of weight $k \in 1/2+\Z$ with respect to $\bar{\rho}$ if it is annihilated by the weight $k$ hyperbolic Laplace operator $\Delta_k$, transforms like a modular form of weight $k$ for $\bar{\rho}$, and grows at most linearly exponentially at $\infty$. We denote the space of such functions by $H_{k,\bar{\rho}}$. Further, we let $M^{!}_{k,\bar{\rho}}$ be the subspace of weakly holomorphic modular forms, consisting of the forms in $H_{k,\bar{\rho}}$ which are holomorphic on $\H$. An important tool in the theory of harmonic Maass forms is the antilinear differential operator $\xi_k f=2iv^{k}\overline{\frac{\partial}{\partial \bar{\tau}}f(\tau)}$. It defines a surjective map $\xi_k: H_{k,\bar{\rho}}\rightarrow M^{!}_{2-k,\rho}$. 
 
In this paper we only consider harmonic Maass forms that map to the space $M_{2-k,\rho}$ of holomorphic modular forms under $\xi_{k}$. Such a form for $\bar{\rho}$ uniquely decomposes into a \emph{holomorphic} and a \emph{non-holomorphic part} $f=f^{+}+f^{-}$ with Fourier expansions
\begin{align*}
f^{+}(\tau)&=\sum\limits_{r(2N)}\sum\limits_{\substack{D\gg \infty \\ D \equiv -r^{2}(4N)}}c_{f}^{+}(D,r)q^{D/4N} \mathfrak{e}_r,
\\
f^{-}(\tau)&=\sum\limits_{r(2N)}\bigg(c_{f}^{-}(0,r)v^{1-k} + \sum\limits_{\substack{D < 0 \\ D \equiv -r^{2}(4N)}}c_{f}^{-}(D,r)\Gamma(1-k,\pi |D|v/N)q^{D/4N}\bigg)\mathfrak{e}_r,
\end{align*}
where $\Gamma(s,x) = \int_{x}^{\infty}t^{s-1}e^{-t}dt$ denotes the incomplete gamma function. The finite Fourier polynomial $\sum_{r(2N)}\sum_{\substack{D \leq 0}}c_{f}^{+}(D,r)q^{D/4N} \mathfrak{e}_r$ is called the \emph{principal part} of $f$. We briefly call the coefficients of the holomorphic part the \emph{holomorphic coefficients} of $f$.
 The holomorphic part $f^{+}$ is sometimes called a \emph{mock modular form}, and $\xi_{k}f = \xi_{k}f^{-}$ its \emph{shadow}.
 
 We remark that the operators $U_{d}$ and $\sigma_{c}$ defined in the last section also act on harmonic Maass forms, the action on the Fourier expansion being the same, and that they commute with the $\xi$-operator.

The regularized inner product of $f \in M^{!}_{k,\rho}$ and $g \in S_{k,\rho}$ for $k \in 	1/2+\Z$ is defined by
	\begin{align}\label{PeterssonInnerProduct}
	(f,g)^{\reg} = \lim_{T \to \infty} \int_{\mathcal{F}_{T}}\langle f(\tau),g(\tau) \rangle v^{k}\frac{du\, dv}{v^{2}}, \qquad (\tau = u+iv),
	\end{align}
	where $\mathcal{F}_{T} = \{\tau \in \H: |u|\leq \tfrac{1}{2}, |\tau| \geq 1 ,v \leq T\}$ is a truncated fundamental domain for the action of $\SL_{2}(\Z)$ on $\H$. For $k=1/2$ the regularized inner product also converges for $g \in M_{1/2,\rho}$. An application of Stokes' theorem (compare Proposition~3.5 in \cite{BruinierFunke06}) shows that for $f  \in M^{!}_{k,\rho}$ with coefficients $c_{f}(D,r)$, and $G \in H_{2-k,\bar{\rho}}$ with holomorphic coefficients $c_{G}^{+}(D,r)$ and $\xi_{2-k}G=g \in S_{k,\rho}$ (or $g \in M_{1/2,\rho}$ if $k =1/2$), the regularized inner product can be evaluated as
	\begin{align}\label{InnerProductEvaluation}
	(f,g)^{\reg} = (f,\xi_{2-k}G)^{\reg} =\sum_{r(2N)}\sum_{\substack{D \in \Z \\ D \equiv r^{2}(4N)}}c_{f}(D,r)c_{G}^{+}(-D,r).
	\end{align}
	This can be employed to show the following useful lemma.
	
	\begin{Lemma}\label{CuspFormLemma}
		Let $G$ be a harmonic Maass form of weight $2-k \in 1/2+\Z$ for $\rho$ or $\bar{\rho}$ whose principal part vanishes and which maps to a cusp form under $\xi_{2-k}$ (or a holomorphic modular form if $k=1/2$). Then $G$ is a cusp form.
	\end{Lemma}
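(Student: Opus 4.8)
The plan is to show that the shadow $g=\xi_{2-k}G$ vanishes identically by computing its regularized Petersson norm and seeing that it is forced to be zero, and then to deduce from the vanishing of the principal part of $G$ that $G$ is in fact a holomorphic cusp form. The whole argument rests on the evaluation formula \eqref{InnerProductEvaluation} applied to a cleverly chosen pair.

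First I would record the two support conditions that make the relevant sum collapse. By assumption $g=\xi_{2-k}G$ is a cusp form (respectively a holomorphic modular form when $k=1/2$), so its Fourier coefficients satisfy $c_g(D,r)=0$ for all $D<0$. On the other hand, the hypothesis that the principal part of $G$ vanishes means exactly that the holomorphic coefficients satisfy $c_G^{+}(D,r)=0$ for all $D\leq 0$; equivalently $c_G^{+}(-D,r)=0$ whenever $D\geq 0$. Note that $g$, being holomorphic, lies in $M^{!}_{k,\rho}$, so it is an admissible first argument in \eqref{InnerProductEvaluation}.

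Next I would apply \eqref{InnerProductEvaluation} to the pair $f=g\in M^{!}_{k,\rho}$ and $G\in H_{2-k,\bar\rho}$ (the case $G\in H_{2-k,\rho}$ follows from the analogous identity with $\rho$ and $\bar\rho$ interchanged). This gives
\[
(g,g)^{\reg}=(g,\xi_{2-k}G)^{\reg}=\sum_{r(2N)}\sum_{\substack{D\in\Z\\ D\equiv r^{2}(4N)}}c_{g}(D,r)\,c_{G}^{+}(-D,r).
\]
By the two support conditions a nonzero summand would require simultaneously $D\geq 0$ (otherwise $c_g(D,r)=0$) and $D<0$ (otherwise $c_G^{+}(-D,r)=0$), which is impossible. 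Hence every term vanishes and $(g,g)^{\reg}=0$.

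Finally, since $g$ is a cusp form (or, for $k=1/2$, a holomorphic modular form, in which case the regularized integral still converges by the remark following \eqref{PeterssonInnerProduct}), the regularized inner product $(g,g)^{\reg}$ coincides with the ordinary Petersson integral $\int_{\mathcal F}\langle g,g\rangle v^{k}\,\tfrac{du\,dv}{v^{2}}$, which is nonnegative and vanishes only for $g=0$. Therefore $g=\xi_{2-k}G=0$, so $G$ is weakly holomorphic; as its principal part (including the constant term) vanishes, $G$ is holomorphic on $\H$ and vanishes at the cusp, i.e.\ $G$ is a cusp form. The only delicate point is the positive-definiteness of the pairing in the weight $1/2$ case, where $g$ need not be cuspidal; here one must invoke the stated convergence of the regularized integral to identify it with the honest (nonnegative) Petersson integral, so that a vanishing norm still forces $g=0$.
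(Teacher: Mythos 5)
Your proof is correct and follows essentially the same route as the paper: both compute $(g,g)^{\reg}=(g,\xi_{2-k}G)^{\reg}$ via \eqref{InnerProductEvaluation}, observe that the cuspidality of $g$ and the vanishing of the principal part of $G$ force every term to vanish, conclude $g=0$ by positive definiteness (with the same separate remark on convergence when $k=1/2$), and then deduce that $G$ is a cusp form. You merely spell out the support conditions that the paper leaves implicit.
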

	
	\begin{proof}
		Suppose that $g = \xi_{2-k} G$ is a cusp form. By \eqref{InnerProductEvaluation} we see that $(g,g)^{\reg} = (g,\xi_{k} G)^{\reg} = 0$, so $g = 0$. This means that $G$ is holomorphic, hence a cusp form. For $k=1/2$ the inner product $(g,g)^{\reg}$ also converges if $g \in M_{1/2,\rho}$, so the same argument applies in this case.
	\end{proof}
	

%

	\subsection{Traces of CM-values and theta lifts} For $k \in \Z$ we let $M_{k}^{!}(N)$ denote the space of weakly holomorphic modular forms of weight $k$ and level $N$, i.e., the space of meromorphic modular forms of weight $k$ for $\Gamma_{0}(N)$ whose poles are supported at the cusps. We denote the subspace of holomorphic forms by $M_{k}(N)$.
	
	For a discriminant $D < 0$ and $r \in \Z$ with $D \equiv r^{2}(4N)$ we let $\calQ_{N,D,r}$ be the set of integral binary quadratic forms $ax^{2}+bxy + cy^{2}$ of discriminant $D = b^{2}-4ac$ with $N \mid a$ and $b \equiv r (2N)$. It splits into the sets $\calQ_{N,D,r}^{+}$ and $\calQ_{N,D,r}^{-}$ of positive and negative definite forms. The group $\Gamma_{0}(N)$ acts on both sets with finitely many orbits, and the order $w_{Q}= \frac{1}{2}|\Gamma_{0}(N)_{Q}|$ is finite. For each $Q \in \calQ_{N,D,r}$ there is an associated Heegner (or CM) point $z_{Q} = (-b+i\sqrt{|D|})/2a$, which is characterized by $Q(z_{Q},1) = 0$. We define two trace functions of $F \in M_{0}^{!}(N)$ by
		\begin{align}\label{traces}
		\tr_{F}^{+}(D,r) = \sum_{Q \in \calQ_{N,D,r}^{+}/\Gamma_{0}(N)}\frac{F(z_{Q})}{w_{Q}} \qquad \text{and} \qquad \tr_{F}^{-}(D,r) = \sum_{Q \in \calQ_{N,D,r}^{-}/\Gamma_{0}(N)}\frac{F(z_{Q})}{w_{Q}}.
		\end{align}
				
		The Kudla-Millson theta lift $\IKM(F,\tau)$ and the Millson theta lift $\IM(F,\tau)$ of a weakly holomorphic modular function $F \in M_{0}^{!}(N)$ are both defined as integrals of the shape
		\[
		I(F,\tau) = \int_{\Gamma_{0}(N)\setminus \H}F(z)\Theta(\tau,z)\frac{dx \, dy}{y^{2}}, \qquad (z = x + iy),
		\]
		where $\Theta(\tau,z)$ is a suitable theta function which is $\Gamma_{0}(N)$-invariant in $z$ and transforms like a modular form of weight $3/2$ (for the Kudla-Millson lift) or weight $1/2$ (for the the Millson lift) for the representation $\bar{\rho}$ in $\tau$. We remark that we renormalize the Millson theta lift from \cite{AlfesSchwagenscheidt} by multiplication with $i/\sqrt{N}$ to simplify the formulas. The most important properties of the two theta lifts are studied in \cite{BruinierFunke06} and \cite{AlfesSchwagenscheidt}. It turns out that both lifts map modular functions for $\Gamma_{0}(N)$ to harmonic Maass forms for $\bar{\rho}$ whose $\xi$-images are linear combinations of unary theta functions. Further, the Fourier expansions of the holomorphic parts of the lifts are given by traces of $F$. We state simplified Fourier expansions of the lifts for special inputs $F$ in Theorem~\ref{ThetaLifts}.

		\section{Algebraic formulas for the coefficients of Ramanujan's $f(q)$ and $\omega(q)$}
		
	As a first application of the Millson theta lift, we find finite algebraic formulas for the coefficients of Ramanujan's third order mock theta functions $f(q)$ defined in \eqref{Ramanujanf} and
	\begin{align*}
	\omega(q) &=  1+ \sum_{n=1}^{\infty}\frac{q^{2n^{2}+2n}}{(1-q)^{2}(1-q^{3})^{2}\cdots (1-q^{2n+1})^{2}} \\
	&= 1 + 2q + 3q^{2} + 4q^{3} + 6q^{4} + 8q^{5}+ 10q^{6}+14q^{7}+\dots
	\end{align*}
	in terms of the traces of a single modular function. We obtain the following result.

	\begin{Theorem}\label{MockThetaFormulas}
		Let $F \in M_{0}^{!}(6)$ be the function defined in \eqref{InputF}.
		\begin{enumerate}
			\item For $n \geq 1$ the coefficients $a_{f}(n)$ of $f(q)$ are given by
			\begin{align*}
	a_{f}(n) = \frac{i}{2\sqrt{24n-1}}\big(\tr_{F}^{+}(1-24n,1)-\tr_{F}^{-}(1-24n,1)\big).
	\end{align*}
			\item For $n \geq 1$ the coefficients $a_{\omega}(n)$ of $\omega(q)$ are given by
	\begin{align*}
	a_{\omega}(n) =
	\begin{cases}
	 \frac{-i}{8\sqrt{24\frac{n+1}{2}-4}}\left(\tr_{F}^{+}\left(4-24\frac{n+1}{2},2\right) - \tr_{F}^{-}\left(4-24\frac{n+1}{2},2\right)\right), & n \text{ odd}, \\
	\frac{-i}{8\sqrt{24(\frac{n}{2}+1)-16}}\big(\tr_{F}^{+}(16-24\left(\frac{n}{2}+1\right),4) - \tr_{F}^{-}(16-24\left(\frac{n}{2}+1\right),4)\big), & n \text{ even}.
	\end{cases}
	\end{align*}
		\end{enumerate}
	\end{Theorem}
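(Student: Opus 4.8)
The plan is to realize both sides of the claimed identities as Fourier coefficients of one and the same vector valued harmonic Maass form of weight $1/2$ for $\bar{\rho}_{6}$, and then to read off the formulas by comparing coefficients. Following Zwegers, I would first assemble the two mock theta functions into a harmonic Maass form. Setting
\[
f_{0}(\tau) = q^{-1/24}f(q), \qquad f_{1}(\tau) = 2q^{1/3}\omega(q^{1/2}), \qquad f_{2}(\tau) = 2q^{1/3}\omega(-q^{1/2}),
\]
Zwegers showed that the completion of $(f_{0},f_{1},f_{2})$ transforms like a vector valued modular form of weight $1/2$, whose non-holomorphic part has an explicit unary theta function of weight $3/2$ as its image under $\xi_{1/2}$. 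The first step is to repackage this into a $\C[\Z/12\Z]$-valued harmonic Maass form $\widehat{Z} \in H_{1/2,\bar{\rho}_{6}}$. Comparing exponents shows that $f_{0}$ feeds the components $\mathfrak{e}_{\pm 1}$ (where $D \equiv -1 \ (24)$), while the even and odd parts of $f_{1},f_{2}$ feed $\mathfrak{e}_{\pm 4}$ (where $D \equiv 8 \ (24)$) and $\mathfrak{e}_{\pm 2}$ (where $D \equiv 20 \ (24)$), respectively. In particular the holomorphic coefficient of $\widehat{Z}$ at $q^{(24n-1)/24}\mathfrak{e}_{1}$ is $a_{f}(n)$, and the coefficients $a_{\omega}(n)$ occur in the $\mathfrak{e}_{2}$- and $\mathfrak{e}_{4}$-components according to the parity of $n$.

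Next I would identify $\widehat{Z}$ with the Millson lift $\IM(F)$. By the properties of the Millson theta lift recalled in the preliminaries, $\IM(F)$ is a harmonic Maass form of weight $1/2$ for $\bar{\rho}_{6}$ whose shadow $\xi_{1/2}\IM(F)$ is a unary theta function of weight $3/2$, and whose holomorphic coefficients at $q^{D/24}\mathfrak{e}_{r}$ with $D>0$ are given by the traces $\tr_{F}^{+}(-D,r) - \tr_{F}^{-}(-D,r)$ up to an explicit normalization (see Theorem~\ref{ThetaLifts} for the precise expansion). The function $F$ in \eqref{InputF} is engineered so that the principal part of $\IM(F)$, which is determined by the principal part of $F$ at the cusps of $\Gamma_{0}(6)$, matches that of $\widehat{Z}$: the pole $q^{-1}$ of $F$ produces exactly the $q^{-1/24}\mathfrak{e}_{\pm 1}$ terms coming from the leading coefficient of $f_{0}$, while $f_{1}$ and $f_{2}$ contribute no principal part. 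In the same way I would verify that the shadow of $\IM(F)$, computed from the constant terms of $F$, equals the unary theta function occurring as Zwegers' shadow.

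Having matched shadows and principal parts, I would conclude $\IM(F) = \widehat{Z}$: since the shadows agree, the difference lies in $M^{!}_{1/2,\bar{\rho}_{6}}$, and since the principal parts agree it has no principal part, hence is a holomorphic modular form in $M_{1/2,\bar{\rho}_{6}}$, which vanishes by Skoruppa's result $M_{1/2,\bar{\rho}} \cong J_{1,6} = \{0\}$. Comparing the holomorphic coefficients of the two forms then yields the stated formulas: the $\mathfrak{e}_{1}$-coefficients give the formula for $a_{f}(n)$ with trace discriminant $1-24n$, and the $\mathfrak{e}_{2}$- and $\mathfrak{e}_{4}$-coefficients give the two cases of the formula for $a_{\omega}(n)$.

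I expect the main obstacle to be precisely the bookkeeping in this matching. Transferring Zwegers' three-component completion into the vector valued Weil representation setting for $N=6$, correctly pinning down which $\mathfrak{e}_{r}$ (and its companion $\mathfrak{e}_{-r}$, where the oddness of the Millson theta kernel forces an antisymmetry) each mock theta function feeds, and keeping track of the factor $2$ in the definitions of $f_{1},f_{2}$ together with the even/odd splitting of $\omega$, is what produces the different normalizing constants $i/2$ and $-i/8$ in the two parts of the statement. Verifying on the nose that the shadow and principal part of $\IM(F)$ agree with those of $\widehat{Z}$, including these constants and signs, is the technical heart of the argument.
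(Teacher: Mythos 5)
Your proposal is correct and follows essentially the same route as the paper: Zwegers' vector valued completion of $f$ and $\omega$, identification with (a multiple of) the Millson lift of $F$ by matching principal parts, and the vanishing of the difference via Skoruppa's theorem $J_{1,6}=\{0\}$. The only minor deviation is that you establish the identification by matching the shadows exactly and then invoking $M_{1/2,\bar{\rho}}=\{0\}$, whereas the paper only needs that both shadows are cusp forms and applies Lemma~\ref{CuspFormLemma} together with $S_{1/2,\bar{\rho}}=\{0\}$, which saves the explicit shadow computation.
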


	\begin{Remark}
		\begin{enumerate}
			\item The theorem extends results of Alfes-Neumann (see the example after Theorem~1.3 in \cite{alfes}), who gave similar formulas for the coefficients $a_{f}(n)$ with $1-24n$ being a fundamental discriminant, by looking at the Kudla-Millson theta lift of $F$ and employing a duality result between the Millson and the Kudla-Millson lift.
			\item Using the Kudla-Millson theta lift, Ahlgren and Andersen \cite{AhlgrenAndersen} gave a formula for the smallest parts function in terms of traces of a modular function. The coefficients of Ramanujan's mock theta functions are related to partitions as well. For example, $a_{f}(n)$ is the number of partitions of $n$ with even rank minus the number with odd rank, where the rank of a partition is its largest part minus the number of parts.
			\item On of the main ingredients in the proof is Zwegers' \cite{ZwegersPaper} realization of the mock theta functions $f(q)$ and $\omega(q)$ as the holomorphic parts of the components of a vector valued harmonic Maass form. Thus the same idea works for other mock theta functions as well, for example for the order $5$ and order $7$ mock theta functions treated in Zwegers' thesis \cite{Zwegers}. The details will be part of an ongoing master's thesis.
			\item We checked the above formulas numerically using Sage \cite{Sage}.
		\end{enumerate}
	\end{Remark}
	
	\begin{Example}
		We illustrate our formulas by computing $a_{\omega}(1) = 2$. Note that $\tr_{F}^{-}(D,r) = \overline{\tr_{F}^{+}(D,r)}$ since $F$ has real coefficients, so we obtain
		\[
		a_{\omega}(1) = \frac{-i}{8\sqrt{20}}\left(\tr_{F}^{+}\left(-20,2\right) - \tr_{F}^{-}\left(-20,2\right)\right) = \frac{2}{8\sqrt{20}}\Im\left(\tr_{F}^{+}\left(-20,2\right) \right).
		\]
		A system of representatives of the $\Gamma_{0}(6)$-classes of forms $ax^{2} + bxy + cy^{2}$ with $6 \mid a , a > 0, b \equiv 2 (12)$ and $D = b^{2}-4ac = -20$ is given by the two forms
		\begin{align*}
		Q_{1} = 6x^{2} -10xy +  5y^{2}, \qquad Q_{2} = 42x^2 - 34xy + 7y^2,
		\end{align*}
		and the corresponding CM-points are
		\begin{align*}
		z_{Q_{1}} = \frac{10+i\sqrt{20}}{12},\qquad z_{Q_{2}} = \frac{34+i\sqrt{20}}{84}.
		\end{align*}
		Plugging these values into the Fourier expansion of $F$, we find
		\begin{align*}
		F(z_{Q_{1}}) = F(z_{Q_{2}}) \sim  i\cdot 17.888543820000.
		\end{align*}
		Thus we obtain
		\[
		a_{\omega}(1) \sim \frac{2}{8\sqrt{20}}\cdot 2\cdot 17.888543820000 = 2.000000000000.
		\]
	\end{Example}
	
	\begin{proof}[Proof of Theorem \ref{MockThetaFormulas}] Zwegers \cite{ZwegersPaper} showed that the function
	\[
	(q^{-\frac{1}{24}}f(q),2q^{\frac{1}{3}}\omega(q^{\frac{1}{2}}),2q^{\frac{1}{3}}\omega(-q^{\frac{1}{2}}))^{T}
	\] 
	is the holomorphic part of a vector valued harmonic Maass form $H = (h_{0},h_{1},h_{2})^{T}$, transforming as
	\begin{align*}
	H(\tau+1) = \begin{pmatrix}\zeta_{24}^{-1} & 0 & 0 \\ 0 & 0 & \zeta_{3} \\ 0 & \zeta_{3} & 0 \end{pmatrix}H(\tau), \quad H\left(-\frac{1}{\tau}\right) = \sqrt{-i\tau}\begin{pmatrix}0 & 1 & 0 \\ 1 & 0 & 0 \\ 0 & 0 & -1 \end{pmatrix}H(\tau).
	\end{align*}
	Further, $\xi_{1/2}H$ is a vector consisting of cuspidal unary theta functions of weight $3/2$. 
	
	One can check that 
	\[
	\tilde{H} = (0,h_{0},h_{2}-h_{1},0,-h_{1}-h_{2},-h_{0},0,h_{0},h_{1}+h_{2},0,h_{1}-h_{2},-h_{0})^{T}
	\]
	is a vector valued harmonic Maass form of weight $1/2$ for the dual Weil representation $\bar{\rho}$. We see that its principal part is given by $q^{-1/24}(\e_{1}-\e_{5}+\e_{7}-\e_{11})$.
	
	The function $(\eta(z)\eta(2z)\eta(3z)\eta(6z))^{2}$ in the denominator of $F$ is a cusp form of weight $4$ for $\Gamma_{0}(6)$ which is invariant under all Atkin-Lehner involutions $W_{d}^{6}$ for $d \mid 6$, and the numerator of $F$ equals $E_{4}|(W_{1}^{6}+W_{2}^{6}-W_{3}^{6}-W_{6}^{6})$. Thus $F$ is an eigenfunction of all Atkin-Lehner involutions, with eigenvalue $1$ for $W_{1}^{6}$ and $W_{2}^{6}$, and eigenvalue $-1$ for $W_{3}^{6}$ and $W_{6}^{6}$. In particular, the Fourier expansions of $F$ at the cusps of $\Gamma_{0}(6)$ are essentially the same, up to a possible minus sign. Using the formula for the Fourier expansion of the Millson lift given in Theorem~5.1 of \cite{AlfesSchwagenscheidt}, it is now straightforward to check that $F$ lifts to a harmonic Maass form $\IM(F,\tau)$ of weight $1/2$ for $\bar{\rho}$, having twice the principal part as $\tilde{H}$ (recall that we multiplied the Millson lift by $i/\sqrt{N}$). In view of Lemma~\ref{CuspFormLemma}, this implies that the difference $\tilde{H} - \frac{1}{2}\IM(F,\tau)$ is a cusp form. But $S_{1/2,\bar{\rho}} \cong J^{\text{cusp}}_{1,6} = \{0\}$, so we find $\tilde{H} = \frac{1}{2}\IM(F,\tau)$. The holomorphic coefficients of $\IM(F,\tau)$ at $q^{(24n-r^{2})/24}\e_{r}$ for $r^{2}-24n < 0$ are given by
	\[
	\frac{i}{\sqrt{24n-r^{2}}}\big(\tr_{F}^{+}(r^{2}-24n,r)-\tr_{F}^{-}(r^{2}-24n,r)\big).
	\]
	Comparing the holomorphic parts of $\tilde{H}$ and $\frac{1}{2}\IM(F,\tau)$, we obtain the stated formulas for the coefficients $a_{f}(n)$ and $a_{\omega}(n)$.	
	\end{proof}

	\section{$\xi$-preimages of unary theta functions and rationality results}

In \cite{BringmannFolsomOno} and \cite{BringmannOno}, Bringmann, Folsom and Ono constructed scalar valued harmonic Maass forms of weight $3/2$ and $1/2$ whose shadows are the components of the unary theta functions $\theta_{1/2}$ and $\theta_{3/2}$ above. In both cases, the proof of the modularity of their $\xi$-preimages relies on transformation properties of various hypergeometric functions and $q$-series. Here we construct $\xi$-preimages for both $\theta_{1/2}$ and $\theta_{3/2}$ using the Kudla-Millson and the Millson theta lift of a single weakly holomorphic modular function $F$ for $\Gamma_{0}(N)$. A nice feature of this approach is that the modularity is clear from the construction. Further, the coefficients of the holomorphic parts of these harmonic Maass forms are given by modular traces of $F$, and thus have good arithmetic properties. Therefore, we obtain rationality results for the holomorphic parts of harmonic Maass forms which map to the space of unary theta functions under $\xi$.

Let $\C((q))$ be the ring of formal Laurent series and let $\C[[q]]$ be the ring of formal power series in $q$.
If $f=\sum_n a(n)q^n \in \C((q))$, we call the polynomial 
\[
P_f= \sum_{n\leq 0} a(n)q^{n}\in \C[q^{-1}]
\]
the \emph{principal part} of $f$. There is a bilinear pairing 
\[
\C((q))\times \C[[q]]\to \C, \quad (f,g)\mapsto \{f,g\} := \text{coefficient of $q^0$  
of $f\cdot g$}.  
\]
It only depends on the principal part of $f$. 

Let $k>0$ be an even integer.	
We denote by $M_{k}^{!,\infty}(N) \subset M_{k}^{!}(N)$ the subspace of those weakly holomorphic modular forms which vanish at all cusps different from $\infty$.
We view the space $M_{2-k}^{!,\infty}(N)$ as a subspace of $\C((q))$ and view $M_k(N)$ as a subspace of $\C[[q]]$ by taking $q$-expansions at the cusp $\infty$. 

\begin{Lemma}
\label{lem:var-bor}
Let $P\in \C[q^{-1}]$. There exists an $F\in M_{2-k}^{!,\infty}(N)$ with prescribed principal part $P_F=P$ at the cusp $\infty$, if and only if 
$\{P,g\} = 0$ for all $g\in M_k(N)$.
\end{Lemma}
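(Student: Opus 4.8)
The statement asserts that the linear map $\Phi\colon M_{2-k}^{!,\infty}(N)\to \C[q^{-1}]$, $F\mapsto P_F$, has image exactly equal to the annihilator $M_k(N)^{\perp}=\{P\in\C[q^{-1}] : \{P,g\}=0 \text{ for all } g\in M_k(N)\}$. The plan is to prove the two inclusions separately. The ``only if'' direction shows $\operatorname{im}\Phi\subseteq M_k(N)^{\perp}$ and rests on the residue theorem, while the ``if'' direction shows $M_k(N)^{\perp}\subseteq\operatorname{im}\Phi$ and is the genuinely harder half, requiring a dimension count.

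For necessity, I would take $F\in M_{2-k}^{!,\infty}(N)$ and $g\in M_k(N)$. Then $Fg$ is a weakly holomorphic modular form of weight $2$ for $\Gamma_{0}(N)$, so the differential $\omega=Fg\,d\tau$ is $\Gamma_{0}(N)$-invariant and descends to a meromorphic differential on the compact modular curve $X_{0}(N)$. Since $Fg$ is holomorphic on $\H$ and invariant, $\omega$ is holomorphic on the open part (the apparent singularities at the elliptic fixed points cancel because of the invariance); at every cusp $\ne\infty$ the factor $F$ vanishes while $g$ stays bounded, so $\omega$ is holomorphic there as well. Hence the only pole of $\omega$ is at $\infty$. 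Writing $Fg=\sum_n a(n)q^n$ and using $d\tau=\frac{dq}{2\pi i\,q}$, the residue of $\omega$ at $\infty$ equals $\frac{1}{2\pi i}a(0)=\frac{1}{2\pi i}\{P_F,g\}$. The residue theorem on $X_{0}(N)$ forces the sum of all residues to vanish, whence $\{P_F,g\}=0$.

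For sufficiency I would first record that $\Phi$ is injective: if $P_F=0$, then $F$ is holomorphic and vanishing at every cusp, hence a holomorphic modular form of weight $2-k\le 0$; as $k\ge 2$ this forces $F=0$. It then suffices to compare dimensions after filtering by the pole order. Fix $m\ge 0$, write $g=\sum_{n\ge 0}b(n)q^{n}$ for $g\in M_k(N)$, and set $A_m=M_k(N)^{\perp}\cap\{P:\deg P\le m\}$ and $B_m=\{F\in M_{2-k}^{!,\infty}(N):\operatorname{ord}_\infty F\ge -m\}$. Identifying a principal part of degree $\le m$ with its coefficient vector in $\C^{m+1}$, one has $A_m=V_m^{\perp}$, where $V_m$ is the image of $M_k(N)$ under $g\mapsto(b(0),\dots,b(m))$; thus $\dim A_m=(m+1)-\dim V_m$ by elementary linear algebra, while $\Phi(B_m)\subseteq A_m$ with $\dim\Phi(B_m)=\dim B_m$ by injectivity. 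The argument is completed by establishing $\dim B_m=(m+1)-\dim V_m$, after which $\Phi(B_m)=A_m$ follows; since every element of $M_k(N)^{\perp}$ has finite degree, letting $m\to\infty$ then yields $\operatorname{im}\Phi=M_k(N)^{\perp}$.

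The hard part is precisely this last dimension count, which is a Riemann--Roch / Serre duality computation on $X_{0}(N)$. I would realize $B_m$ as the space of global sections of the line bundle of weight $2-k$ modular forms twisted by $m\cdot\infty$ and required to vanish at the remaining cusps, compute its Euler characteristic by Riemann--Roch, and identify the $H^{1}$-term with the dual of the weight-$k$ bundle via Serre duality and the residue pairing, so that the defect is exactly $\dim V_m$. Equivalently, this says that the residue pairing between $M_{2-k}^{!,\infty}(N)$ and $M_k(N)$ is perfect, which is the standard obstruction principle underlying Borcherds products. The one subtlety I expect to handle with care is the contribution of the elliptic points of $\Gamma_{0}(N)$, both to the correction terms of the relevant line bundle and to the vanishing of the elliptic residues in the necessity step.
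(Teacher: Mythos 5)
Your proposal follows essentially the same route as the paper, which establishes this lemma by adapting the Serre duality argument of Borcherds (\emph{borcherdsDuke}, Theorem~3.1): the residue-theorem computation for necessity and the duality-based dimension count for sufficiency are exactly the intended argument, and your handling of the elliptic points and of the cusps other than $\infty$ is correct. The one step you leave as an outline --- the identification $\dim B_m=(m+1)-\dim V_m$ via Riemann--Roch and Serre duality on $X_0(N)$ --- is precisely the content of the result the paper cites, so nothing essential is missing.
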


This can be proved by varying the argument of \cite{borcherdsDuke}, 
Theorem~3.1. By Serre duality it can be shown that the subspace 	$M_{2-k}^{!,\infty}(N)\subset \C((q))$ is the orthogonal complement of $M_k(N)\subset \C[[q]]$ with respect to the pairing $\{\cdot,\cdot\}$.

We use this lemma to construct a suitable input $F$ for the two theta lifts. 
		
	\begin{Lemma}
	\label{ConstructionModularFunction}
		Let $k\in \Z_{>0}$ be even. There exists an $F(z) = \sum_{n \gg -\infty}a(n)q^{n}\in M_{2-k}^{!,\infty}(N)$ with the following properties:
		\begin{enumerate}
			\item The Fourier coefficients $a(n)$ of $F$ at $\infty$ lie in $\Q$.
			\item The constant term $a(0)$ of $F$ at $\infty$ is non-zero.
		\end{enumerate}
\end{Lemma}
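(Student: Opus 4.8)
The plan is to produce such an $F$ by exhibiting a weakly holomorphic form in $M_{2-k}^{!,\infty}(N)$ whose principal part is forced to have a nonzero constant term, and then to arrange rationality of all coefficients. The natural tool is Lemma~\ref{lem:var-bor}: a polynomial $P\in \C[q^{-1}]$ is the principal part of some $F\in M_{2-k}^{!,\infty}(N)$ if and only if $\{P,g\}=0$ for all $g\in M_k(N)$. So the whole problem reduces to finding a $P$ with nonzero constant term that pairs trivially against the finite-dimensional space $M_k(N)$. Since $k>0$ is even, every $g\in M_k(N)$ is a holomorphic modular form, hence has no negative-index Fourier coefficients; thus $\{q^{-n},g\}$ for $n>0$ only sees the coefficient of $q^{n}$ in $g$, while $\{q^{0},g\} = \{1,g\}$ sees the constant term $g(0)$ of $g$ at $\infty$.

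First I would set up the linear-algebra picture. Write $M_k(N)$ for the target space and let $g_1,\dots,g_d$ be a basis. The condition $\{P,g\}=0$ for all $g$ is a finite system of linear equations in the (finitely many) coefficients of $P=\sum_{n\le 0}b(-n)q^{-n}$. I want to show this system admits a solution with $b(0)\neq 0$, i.e.\ with nonzero constant term. The key structural input is that the dimension formula, together with the ability to take $P$ of arbitrarily large degree in $q^{-1}$, gives far more free variables than constraints: if I allow $P$ to range over polynomials $\sum_{n=0}^{M}b(n)q^{-n}$ for $M$ large, the number of unknowns $M+1$ eventually exceeds $\dim M_k(N)=d$, so the solution space is nonzero and in fact grows with $M$. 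The real content is ruling out the degenerate possibility that every such $P$ is forced to have vanishing constant term.

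That degeneracy is exactly the main obstacle, and here is how I would dispose of it. Suppose for contradiction that every $F\in M_{2-k}^{!,\infty}(N)$ had vanishing constant term at $\infty$; equivalently, by Serre duality (the final sentence before the lemma), the functional $P\mapsto b(0)$ were a linear combination of the pairing functionals $P\mapsto\{P,g_i\}$. Tracing through the pairing, $\{P,g\}$ is a linear combination of the coefficients $b(n)$ weighted by the $q^n$-coefficients of $g$, and these have \emph{no} constant-term ($n=0$) contribution beyond $\{1,g\}=g(0)$. The obstruction to having $b(0)\neq 0$ therefore lives entirely in whether some $g\in M_k(N)$ has nonzero value $g(0)$ at $\infty$ while being constrained elsewhere --- in practice one uses that $M_k(N)$ contains an Eisenstein series with nonzero constant term, but whose higher coefficients can be cancelled by adjusting $P$ in the positive-index directions. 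Concretely, I would argue that because the positive-index coefficients of $P$ are unconstrained once $M$ is taken large, I can first pick $b(0)=1$ and then solve for the remaining $b(n)$ to satisfy the $d$ conditions $\{P,g_i\}=0$; solvability again follows from the overdetermination in favor of unknowns.

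Finally, to secure rationality (property~(1)), I would observe that $M_k(N)$ has a basis of forms with rational Fourier coefficients (for instance by the theory of integral structures on spaces of modular forms, or simply $q$-expansions of Eisenstein series and newforms normalized rationally). Then the linear system $\{P,g_i\}=0$ with $b(0)=1$ has rational coefficients, so by linear algebra over $\Q$ it has a rational solution $P$. Applying Lemma~\ref{lem:var-bor} produces $F\in M_{2-k}^{!,\infty}(N)$ with principal part $P$; since the principal part is rational and $F$ is determined up to addition of a holomorphic cusp-vanishing form (here the holomorphic forms in $M_{2-k}(N)$ are $0$ or again rational), one concludes all coefficients $a(n)$ of $F$ lie in $\Q$, with $a(0)=b(0)=1\neq 0$, as required.
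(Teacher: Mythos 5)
Your overall route is the same as the paper's: reduce via Lemma~\ref{lem:var-bor} to producing a rational polynomial $P$ with nonzero constant term satisfying $\{P,g\}=0$ for all $g\in M_k(N)$, and obtain $P$ by linear algebra. However, the central step is not justified. You fix $b(0)=1$ and claim the inhomogeneous system $\sum_{n=1}^{M}b(n)c_{g_i}(n)=-c_{g_i}(0)$ ($i=1,\dots,d$) is solvable because ``overdetermination in favor of unknowns''; but an underdetermined \emph{inhomogeneous} linear system $Ax=v$ need not be solvable --- you need $v\in\operatorname{im}(A)$, and having more columns than rows says nothing about this. What actually makes the system solvable is that the map $g\mapsto (c_g(1),\dots,c_g(M))$ is injective on $M_k(N)$ for $M$ large (so the transpose of your system is injective, hence $A$ is surjective). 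This in turn requires an argument: a form in the intersection of the kernels has $q$-expansion equal to a constant, and a nonzero constant is not modular of positive weight; finite-dimensionality then stabilizes the descending chain. This is precisely the role played in the paper by the injectivity of $M_k(N)\to\C[[q]]/\C[q]$, which is used there to choose $P_0\in q^{-1}\Q[q^{-1}]$ with $\{P_0,g_i\}=0$ and $\{P_0,E\}=-1$ (where $M_k(N)=M_{k,0}(N)\oplus\C E$), and then set $P=P_0+1$. You gesture at the Eisenstein series but never supply this injectivity, which is the actual content of the step. You should also handle the excluded case $N=1$, $k=2$, where $M_k(N)=0$ and one simply takes $F=1$.

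The rationality of \emph{all} coefficients of $F$ (not just of its principal part) is also glossed over. Solving the finite system over $\Q$ gives a rational principal part $P$, and uniqueness of $F\in M_{2-k}^{!,\infty}(N)$ with principal part $P$ follows as you say; but uniqueness plus a rational principal part does not by itself make the infinitely many remaining coefficients rational. The paper closes this by a Galois argument: $\Aut(\C/\Q)$ acts on Fourier coefficients and preserves $M_{2-k}^{!,\infty}(N)$ because the cusp $\infty$ of $X_0(N)$ is defined over $\Q$ and the other cusps are permuted among themselves; hence $F^\sigma$ is another solution with the same principal part, so $F=F^\sigma$ and the coefficients are rational. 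Some argument of this kind (or, equivalently, a rational structure on $M_{2-k}^{!,\infty}(N)$) is needed to finish your proof.
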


\begin{proof}
If $N=1$ and $k=2$ then $M_k(N)$ is trivial. In this case we can take $F=1$. Therefore we exclude this case from now on, so that $M_k(N)$ is non-trivial. We let $M_{k,0}(N)\subset M_{k}(N)$ be the codimension $1$ subspace of those modular forms which vanish at the cusp $\infty$.

Since the cusp at $\infty$ of $X_0(N)$ is defined over $\Q$,
there exists
an $E=\sum_{n\geq 0} c(n)q^n\in M_k(N)$ with rational coefficients which has value $1$  at $\infty$, i.e., $c(0)=1$. (Such an $E$ can be obtained explicitly as a linear combination of the Eisenstein series at the cusps $\infty$ and $0$.)
It is well known that $M_k(N)$ has a basis consisting of modular forms with rational coefficients. Using $E$, we see that the space $M_{k,0}(N)$ also has a basis $g_1,\dots ,g_d$ consisting of forms with rational coefficients. Moreover, we have 
\[
M_k(N)=M_{k,0}(N)\oplus \C E.
\]

The linear map $M_k(N)\to \C[[q]]/\C[q]$ induced by mapping a modular form to its $q$-expansion is injective. Hence, the images of $g_1,\dots,g_d$ and $E$ are linearly independent. Consequently, there exists a polynomial
\[
P_0=\sum_{n<0} a(n)q^{n}\in q^{-1}\Q[q^{-1}]
\]
such that 
\begin{align*}
\{ P_0, g_i\} &= 0,\quad \text{for $i=1,\dots,d$,} \qquad \text{and} \qquad \{ P_0, E\} = -1.
\end{align*}
Put $P=P_0+1\in \Q[q^{-1}]$. Then we have 
\begin{align*}
\{ P, g_i\} &= 0,\quad \text{for $i=1,\dots,d$,} \qquad \text{and} \qquad \{ P, E\} = 0,
\end{align*}
and therefore $\{P,g\}=0$ for all $g\in M_k(N)$.

According to Lemma~\ref{lem:var-bor} there exists an $F\in M_{2-k}^{!,\infty}(N)$ with principal part $P_F=P$. We denote the Fourier expansion of $F$ by 
$F=\sum_n a(n) q^n$. The group $\Aut(\C/\Q)$ acts on $M_{2-k}^!(N)$ by conjugation of the Fourier coefficients. Under the action of $\Aut(\C/\Q)$ on $X_0(N)$ the cusp at $\infty$ is fixed, and the other cusps are permuted among themselves. Hence $\Aut(\C/\Q)$ also acts on $M_{2-k}^{!,\infty}(N)$ by conjugation of the Fourier coefficients. Consequently, for $\sigma\in \Aut(\C/\Q)$, the form $F^\sigma$ also belongs to $M_{2-k}^{!,\infty}(N)$. Since $P_F\in \Q[q^{-1}]$, the form $F-F^\sigma$ has vanishing principal part and therefore vanishes identically. We find that $a(n)=a(n)^\sigma$ for all $n\in \Z$.
Therefore, all Fourier coefficients of $F$ are rational.
\end{proof}

	We now use the modular function $F\in M_{0}^{!,\infty}(N)$ constructed in Lemma~\ref{ConstructionModularFunction} as an input for the Kudla-Millson and the Millson theta lift. The following theorem is just a straightforward simplification of Theorem~4.5 from \cite{BruinierFunke06} and Theorem~5.1 from \cite{AlfesSchwagenscheidt}, and will thus not be proved here. In order to simplify the formulas, we multiply the expansion of the Millson lift given in \cite{AlfesSchwagenscheidt} by $i/\sqrt{N}$.
	
	\begin{Theorem}\label{ThetaLifts}
		Let $F(z) = \sum_{n \gg -\infty}a(n)q^{n}\in M_{0}^{!,\infty}(N)$ be as in Lemma~\ref{ConstructionModularFunction} and $r \in \Z/2N\Z$.
		\begin{enumerate}	
		\item The function
	\begin{align*}
	\IKM(F,\tau)_{r}^{+} &= \sum_{\substack{D < 0 \\ D \equiv r^{2}(4N)}}\big(\tr_{F}^{+}(D,r)+\tr_{F}^{-}(D,r)\big)q^{-D/4N} \\
	&\quad + 4\delta_{0,r}\sum_{n \geq 0}a(-n)\sigma_{1}(n) - \sum_{b > 0}b\big(\delta_{b,r}+\delta_{b,-r} \big)\sum_{n>0}a(-bn)q^{-b^{2}/4N} 
	\end{align*}
	is the $r$-component of the holomorphic part of a harmonic Maass form of weight $3/2$ for $\bar{\rho}$ with
	\[
	\xi_{3/2}\left(\IKM(F,\tau)\right) = -\frac{\sqrt{N}}{4\pi}a(0)\theta_{1/2}(\tau).
	\]
	Here $\delta_{r,r'}$ equals $1$ if $r \equiv r' (2N)$ and $0$ otherwise, and $\sigma_{1}(0) = -\frac{1}{24}$.
	\item The function
	\begin{align*}
	\IM(F,\tau)_{r}^{+} &= \sum_{\substack{D < 0 \\ D \equiv r^{2}(4N)}}\frac{i}{\sqrt{|D|}}\big(\tr_{F}^{+}(D,r)-\tr_{F}^{-}(D,r)\big)q^{-D/4N} \\
	&\quad +\sum_{b > 0}\big(\delta_{b,r}-\delta_{b,-r} \big)\sum_{n>0}a(-bn)q^{-b^{2}/4N} 
	\end{align*}
	is the $r$-component of the holomorphic part of a harmonic Maass form of weight $1/2$ for $\bar{\rho}$ with
	\[
	\xi_{1/2}\left(\IM(F,\tau)\right) = -\frac{1}{2\sqrt{N}}a(0)\theta_{3/2}(\tau).
	\]
	\end{enumerate}
	\end{Theorem}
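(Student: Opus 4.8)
The plan is to obtain the two expansions by specializing the general Fourier expansions of the Kudla--Millson and Millson lifts---established in \cite{BruinierFunke06}, Theorem~4.5, and \cite{AlfesSchwagenscheidt}, Theorem~5.1---to the particular input $F \in M_{0}^{!,\infty}(N)$, and then to read off the two shadows. Accordingly I would not recompute the lifts from scratch but rather track how the hypothesis ``$F$ vanishes at all cusps but $\infty$'' prunes the known formulas.

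First I would recall the structure of the general expansions. Both theta kernels live on the rational quadratic space of signature $(1,2)$ of trace-zero integral matrices (equivalently, integral binary quadratic forms), the Kudla--Millson kernel being built from the even Schwartz function $e^{-\pi x^{2}}$ (giving weight $3/2$ in $\tau$) and the Millson kernel from the odd function $xe^{-\pi x^{2}}$ (giving weight $1/2$). Unfolding the regularized integral against $F$, the Fourier coefficient at a negative discriminant $D<0$ collects the CM-values $F(z_{Q})$ with $Q \in \calQ_{N,D,r}$; since the even kernel weights the two definite orientation classes with equal signs and the odd kernel with opposite signs, these coefficients are $\tr_{F}^{+}(D,r)+\tr_{F}^{-}(D,r)$ for the Kudla--Millson lift and, after renormalization, $\frac{i}{\sqrt{|D|}}\big(\tr_{F}^{+}(D,r)-\tr_{F}^{-}(D,r)\big)$ for the Millson lift. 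The coefficients at the nonnegative discriminants $D=b^{2}\geq 0$---the split and degenerate forms parametrizing geodesics, together with the $D=0$ term---arise instead from the behaviour of $F$ at the cusps and produce the principal part and the constant term of the lift.

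The decisive simplification comes from $F \in M_{0}^{!,\infty}(N)$. In the general formulas the nonnegative-discriminant terms and the shadow are sums over the cusps (equivalently, the Atkin--Lehner classes) of $\Gamma_{0}(N)$, each weighted by the constant term of $F$ at the corresponding cusp. As $F$ vanishes at every cusp except $\infty$, where its expansion is $\sum_{n}a(n)q^{n}$, only the cusp $\infty$ survives. This collapses the shadow from a linear combination of unary theta functions to a single one scaled by the constant term $a(0)$, yielding $-\frac{\sqrt{N}}{4\pi}a(0)\theta_{1/2}$ and $-\frac{1}{2\sqrt{N}}a(0)\theta_{3/2}$ respectively, and it expresses the $D=b^{2}$ coefficients purely through the coefficients $a(-bn)$ at $\infty$. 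Carrying the even/odd symmetry of the two kernels through then accounts for the factors $4\delta_{0,r}$ and $\delta_{b,r}+\delta_{b,-r}$ (Kudla--Millson, with the divisor sum $\sum_{n}a(-n)\sigma_{1}(n)$ and the convention $\sigma_{1}(0)=-\frac{1}{24}$) versus $\delta_{b,r}-\delta_{b,-r}$ and the absence of any $D=0$ contribution (Millson); finally I would push the renormalization factor $i/\sqrt{N}$ through the Millson expansion to produce the stated $i/\sqrt{|D|}$ and $1/(2\sqrt{N})$.

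The main obstacle is the nonnegative-discriminant part rather than the CM traces: whereas the $D<0$ coefficients follow transparently from unfolding, the $D\geq 0$ terms and the precise shape of the shadow require careful control of the regularization of the divergent integral and of the Fourier expansions of $F$ and the theta kernel near the cusps, and it is exactly here that the precise constants (the factor $4$, the $\sigma_{1}$ with $\sigma_{1}(0)=-\frac{1}{24}$, and the normalizations $\sqrt{N}/4\pi$ and $1/(2\sqrt{N})$) are pinned down. Since these computations are already carried out in the two cited theorems, the genuine work is the bookkeeping that reduces the general expansions to the stated form. In particular, I would treat most carefully the verification that the shadow reduces to a single theta function proportional to $a(0)$, as this is precisely what makes the nonvanishing of $a(0)$ guaranteed by Lemma~\ref{ConstructionModularFunction} deliver a nonzero, explicit shadow for the subsequent applications.
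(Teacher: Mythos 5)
Your proposal matches the paper's treatment: the paper gives no independent proof of Theorem~\ref{ThetaLifts}, stating only that it is a straightforward simplification of Theorem~4.5 of \cite{BruinierFunke06} and Theorem~5.1 of \cite{AlfesSchwagenscheidt} obtained by specializing to $F \in M_{0}^{!,\infty}(N)$ (so that only the cusp $\infty$ contributes to the shadow and the $D\geq 0$ terms) and renormalizing the Millson lift by $i/\sqrt{N}$. Your more detailed account of the unfolding, the even/odd kernel symmetry, and the collapse of the cusp sum is a correct elaboration of exactly this reduction.
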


	\begin{Remark}
	The Kudla-Millson lift of the constant $1$-function gives a generalization of Zagier's non-holomorphic Eisenstein series of weight $3/2$ from \cite{ZagierEisensteinSeries} to arbitrary level $N$, see also \cite{BruinierFunke06}, Remark 4.6. The $\xi$-image of the Eisenstein series is a linear combination of unary theta series associated to lattices $(\Z,n \mapsto dn^{2})$ with $d \mid N$, and is invariant under all Atkin-Lehner involutions. Since $\theta_{1/2}$ is only Atkin-Lehner invariant if $N = 1$ or $N = p$ is prime, we can \emph{not} take the Eisenstein series as a $\xi$-preimage of $\theta_{1/2}$ in general. Also note that usually the principal parts of the harmonic Maass forms given above are non-zero.
	\end{Remark}

	By the theory of complex multiplication, the rationality properties of traces of weakly holomorphic modular functions are well understood. Therefore, we obtain the following result on the rationality of the holomorphic parts of the harmonic Maass forms given above.
	
	\begin{Theorem}\label{RationalHolomorphicPart}
		Let $F \in M_{0}^{!}(N)$ and suppose that the Fourier coefficients of $F$ at $\infty$ lie in $\Z$ and the expansions at all other cusps have coefficients in $\Z[\zeta_{N}]$. Then for $D \equiv r^{2}(4N), D < 0,$ the numbers
			\begin{align}\label{IntegralTraces}
			6 \big(\tr_{F}^{+}(D,r)+\tr_{F}^{-}(D,r)\big) \quad \text{and} \quad 6 t  \frac{i}{\sqrt{|D|}}\big(\tr_{F}^{+}(D,r)-\tr_{F}^{-}(D,r)\big)
			\end{align}
			are rational integers, where $D = t^{2}D_{0}$ with a negative fundamental discriminant $D_{0}$.
	\end{Theorem}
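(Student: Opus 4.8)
The plan is to deduce both integrality statements from the theory of complex multiplication, in the spirit of the classical results on traces of singular moduli (\cite{ZagierTraces}, \cite{BruinierOnoHeegnerDivisors}). Fix $D<0$ with $D\equiv r^{2}\,(4N)$ and write $D=t^{2}D_{0}$ with $D_{0}$ a fundamental discriminant; put $K=\Q(\sqrt{D_{0}})=\Q(\sqrt{D})$ and let $\mathcal{O}$ be the order of discriminant $D$ in $K$. Since every Heegner point $z_{Q}$, $Q\in\calQ_{N,D,r}^{\pm}$, lies in the interior of $\H$, the weakly holomorphic function $F$ is holomorphic at $z_{Q}$, and as $F$ has algebraic Fourier coefficients the value $F(z_{Q})$ is an algebraic number. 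The first step is to invoke the main theorem of complex multiplication: each $F(z_{Q})$ lies in the ring class field $H$ of $\mathcal{O}$, and $\mathrm{Gal}(H/K)$, identified with the class group of $\mathcal{O}$, permutes the Heegner points of discriminant $D$ (with their $\Gamma_{0}(N)$-level structure, encoded by $r\bmod 2N$) through its action on the $\Gamma_{0}(N)$-classes in each of $\calQ_{N,D,r}^{+}$ and $\calQ_{N,D,r}^{-}$. Consequently $\tr_{F}^{+}(D,r)$ and $\tr_{F}^{-}(D,r)$, being weighted sums over complete $\mathrm{Gal}(H/K)$-orbits, are each fixed by $\mathrm{Gal}(H/K)$ and hence lie in $K$.

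Next I would descend from $K$ to $\Q$ using the remaining symmetry. The ring class field $H$ is Galois over $\Q$, and the nontrivial element of $\mathrm{Gal}(K/\Q)$ interchanges $\tr_{F}^{+}(D,r)$ and $\tr_{F}^{-}(D,r)$: geometrically this reflects the involution $z\mapsto -\bar{z}$, which carries the positive definite Heegner points to the negative definite ones, combined with the action of $\Aut(\C/\Q)$ on the Fourier coefficients of $F$ (this is precisely where the hypothesis that $F$ has expansions in $\Z[\zeta_{N}]$ at the cusps enters, exactly as in the proof of Lemma~\ref{ConstructionModularFunction}). It follows that $\tr_{F}^{+}(D,r)+\tr_{F}^{-}(D,r)$ is fixed by all of $\mathrm{Gal}(H/\Q)$, hence rational, while $\tr_{F}^{+}(D,r)-\tr_{F}^{-}(D,r)$ is sent to its negative by complex conjugation and therefore lies in $\sqrt{D_{0}}\,\Q$. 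Since $\frac{i}{\sqrt{|D|}}=-\frac{1}{\sqrt{D}}$ and $\sqrt{D}=t\sqrt{D_{0}}$, multiplying the difference by $\frac{i}{\sqrt{|D|}}$ produces a rational number, which gives the rationality of both quantities in \eqref{IntegralTraces}.

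The remaining, and I expect principal, obstacle is the integrality. Here the hypotheses that $F$ has integral $q$-expansion at $\infty$ and $\Z[\zeta_{N}]$-integral expansions at the other cusps are used to guarantee that $F$ is integral over $\Z[j(z),j(Nz)]$ on $X_{0}(N)$; since $j(z_{Q})$ and $j(Nz_{Q})$ are algebraic integers at CM points, every $F(z_{Q})$ is an algebraic integer. The weights $w_{Q}=\frac{1}{2}|\Gamma_{0}(N)_{Q}|$ lie in $\{1,2,3\}$, with $2$ and $3$ occurring only at the $\SL_{2}(\Z)$-elliptic points of orders $2$ and $3$, so multiplying by $6$ clears these denominators and makes $6\,\tr_{F}^{\pm}(D,r)$ algebraic integers; combined with the rationality above this settles the first quantity. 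For the second, one writes $\tr_{F}^{+}(D,r)-\tr_{F}^{-}(D,r)=\sqrt{D_{0}}\cdot q$ with $q\in\Q$, so that $6t\cdot\frac{i}{\sqrt{|D|}}(\tr_{F}^{+}(D,r)-\tr_{F}^{-}(D,r))=\pm 6q$, and one must check that $6q$ is a rational integer. This is the delicate point: it requires controlling the image of the anti-invariant algebraic integer $6(\tr_{F}^{+}(D,r)-\tr_{F}^{-}(D,r))$ inside $\mathcal{O}_{K}\cap\sqrt{D_{0}}\,\Q$, and verifying that the extra factor $t$ exactly compensates the passage between the order $\mathcal{O}$ of discriminant $D$ and the maximal order, via the standard correspondence between Heegner points of discriminant $D$ and proper $\mathcal{O}$-ideals. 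The bookkeeping of the local denominators at $2$ and $3$ in this last step is where the constant $6$ and the factor $t$ must be justified carefully.
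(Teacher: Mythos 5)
Your argument follows the paper's proof almost step for step: complex multiplication places the values $F(z_Q)$ in the ring class field $H$ of the order of discriminant $D$, these values are algebraic integers, $\mathrm{Gal}(H/K)$ with $K=\Q(\sqrt{D_0})$ permutes the Heegner points so that $6\tr_F^{\pm}(D,r)\in\mathcal{O}_K$ (the factor $6$ clearing the possible stabilizer orders $w_Q\in\{1,2,3\}$), and complex conjugation interchanges $\tr_F^{+}$ and $\tr_F^{-}$, giving $\tr_F^{+}+\tr_F^{-}\in\Q$ and $\tr_F^{+}-\tr_F^{-}\in\sqrt{D_0}\,\Q$. This is exactly the paper's route. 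One small misattribution: the relation $\overline{\tr_F^{-}(D,r)}=\tr_F^{+}(D,r)$ needs only the rationality of the coefficients of $F$ at $\infty$; the $\Z[\zeta_N]$-integrality at the other cusps enters solely through the integrality of the CM values, not through the conjugation symmetry.

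The one place where your writeup stops short of a proof is precisely the point you flag yourself: you reduce the second assertion to ``$6q\in\Z$'' and leave it open, suggesting one must track the passage between the order of discriminant $D$ and the maximal order via proper ideals. That is a red herring. The factor $t$ plays no arithmetic role beyond converting $\sqrt{|D|}$ into $\sqrt{|D_0|}$, since $6t\,\tfrac{i}{\sqrt{|D|}}=\tfrac{6i}{\sqrt{|D_0|}}$, and the missing step is elementary: set $\beta=6\tr_F^{+}(D,r)$, so that $\beta\in\mathcal{O}_K$ and $6\tr_F^{-}(D,r)=\bar\beta$ by what you have already shown. Writing $\beta=a+b\,\tfrac{D_0+\sqrt{D_0}}{2}$ with $a,b\in\Z$ (this is a $\Z$-basis of $\mathcal{O}_K$ for every fundamental $D_0$) gives $\beta-\bar\beta=b\sqrt{D_0}$, whence $6t\,\tfrac{i}{\sqrt{|D|}}\bigl(\tr_F^{+}-\tr_F^{-}\bigr)=\tfrac{i}{\sqrt{|D_0|}}\,b\sqrt{D_0}=-b\in\Z$. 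Note that it matters here that the element is of the form $\beta-\bar\beta$ and not merely a trace-zero algebraic integer: for $D_0\equiv 0\ (4)$ the trace-zero elements of $\mathcal{O}_K$ form $\tfrac12\Z\sqrt{D_0}$, so the weaker property you invoke (``anti-invariant algebraic integer in $\mathcal{O}_K\cap\sqrt{D_0}\,\Q$'') would only give $2b\in\Z$. With this observation inserted your proof is complete and coincides with the paper's.
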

	
	\begin{proof}
		The assumption on the integrality of $F$ at $\infty$ implies that $F \in \Q(j,j_{N})$. By the theory of complex multiplication (see Theorem~4.1 in \cite{BruinierOnoAlgebraicFormulas}), the values $F(z_{Q})$ of $F$ at Heegner points $z_{Q}$ of discriminant $D$ lie in the ring class field of the order $\mathcal{O}_{D}$ over $\Q(\sqrt{D})$. Further, Lemma~4.3 in \cite{BruinierOnoAlgebraicFormulas} asserts that the values $F(z_{Q})$ are algebraic integers. The Galois group of the ring class field of $\mathcal{O}_{D}$ over $\Q(\sqrt{D})$ permutes the Heegner points occuring in $\tr_{F}^{+}(D,r)$ and $\tr_{F}^{-}(D,r)$, see \cite{Gross}. It follows that $6\tr_{F}^{+}(D,r)$ and $6\tr_{F}^{-}(D,r)$ are algebraic integers in $\Q(\sqrt{D})$, where the factor $6$ was added to get rid of possible factors $w_{Q}$ in the denominator. Using that $F$ has rational coefficients at $\infty$, we see that $\overline{\tr_{F}^{-}(D,r)} = \tr_{F}^{+}(D,r)$, and thus
		\begin{align*}
		&\tr_{F}^{+}(D,r)+\tr_{F}^{-}(D,r) \in \Q \quad \text{and} \quad \tr_{F}^{+}(D,r)-\tr_{F}^{-}(D,r) \in \sqrt{D}\Q.
		\end{align*}
		This implies that the quantities in \eqref{IntegralTraces} are rational integers.
	\end{proof}
	
		\begin{Remark} In all the numerical examples we looked at, the numbers in \eqref{IntegralTraces} were already integers without the factors $6$ and $6 t$. Possibly, this is always the case.	
		\end{Remark}
	
		Combining Theorem~\ref{ThetaLifts} and Theorem~\ref{RationalHolomorphicPart} we obtain that if $F \in M_{0}^{!,\infty}(N)$ is as in Lemma~\ref{ConstructionModularFunction} and has rational principal part at $\infty$, then the holomorphic parts of $\IKM(F,\tau)$ and $\IM(F,\tau)$ have rational Fourier coefficients. This rationality result is remarkable since the holomorphic coefficients of a harmonic Maass form of weight $1/2$ which does not map to the space of unary theta functions under $\xi_{1/2}$ are conjectured to be transcendental almost always, see the conjecture and Corollary~1.4 in the introduction of \cite{BruinierOnoHeegnerDivisors}.

	\begin{Theorem}\label{AlgebraicCoefficients} Let $K$ be a number field and let $H_{k,\bar{\rho}}(K)$ be the subspace of $H_{k,\bar{\rho}}$ consisting of forms whose principal part is defined over $K$.
		\begin{enumerate}
			\item Let $f \in H_{1/2,\bar{\rho}}(K)$ and suppose that $f$ is mapped  to the space of unary theta functions by $\xi_{1/2}$. Then the coefficients of the holomorphic part $f^{+}$ of $f$ lie in $K$.
			\item Let $f \in H_{3/2,\bar{\rho}}(K)$ and suppose that $f$ is mapped  to the space of unary theta functions by $\xi_{3/2}$. Then there is a cusp form $f' \in S_{3/2,\bar{\rho}}$ such that the coefficients of $f^{+}-f'$ lie in $K$.
		\end{enumerate}
	\end{Theorem}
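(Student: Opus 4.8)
The plan is to reduce both statements to the explicit $\xi$-preimages provided by Theorem~\ref{ThetaLifts}. For part~(1) I would use the Millson lift (weight $1/2$, shadow a weight $3/2$ theta function), and for part~(2) the Kudla--Millson lift (weight $3/2$, shadow a weight $1/2$ theta function). Fix a basis $\theta^{(1)},\dots,\theta^{(s)}$ of the space of unary theta functions of the relevant weight, consisting of the operator translates $\theta^{\sigma_c}|U_d$ appearing in its definition. Since $U_d$ and $\sigma_c$ commute with $\xi$ and preserve rational Fourier coefficients, applying them to the lift of a rational input $F$ from Lemma~\ref{ConstructionModularFunction} produces, for each $i$, a harmonic Maass form $\mathcal{M}_i$ for $\bar{\rho}$ with $\xi\mathcal{M}_i=c_i\theta^{(i)}$, $c_i\neq 0$, whose principal part $p_i$ is rational and whose holomorphic coefficients are rational (by Theorem~\ref{RationalHolomorphicPart} and the remark following it). By construction these $\mathcal{M}_i$ realize a preimage of every element of the space of unary theta functions.

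The core of the argument is a rigidity statement for principal parts. First I would show that $p_1,\dots,p_s$ are linearly independent: if $\sum_i\gamma_i p_i=0$, then $\sum_i\gamma_i\mathcal{M}_i$ has vanishing principal part and maps under $\xi$ into the unary theta functions, so Lemma~\ref{CuspFormLemma} forces it to be a cusp form; a cusp form has vanishing shadow, whence $\sum_i\overline{\gamma_i}c_i\theta^{(i)}=0$ and $\gamma_i=0$. The same lemma shows that the space $V$ of principal parts of harmonic Maass forms mapping into the unary theta functions decomposes as $V=\mathrm{span}_{\C}\{p_1,\dots,p_s\}\oplus W$, where $W$ is the space of principal parts of weakly holomorphic forms for $\bar{\rho}$. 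Now comes the key rationality input: the $p_i$ are rational, $M^{!}_{k,\bar{\rho}}$ carries a $\Q$-structure, and $W$ is stable under $\Aut(\C/\Q)$. Writing the principal part $p$ of the given $f\in H_{k,\bar{\rho}}(K)$ as $p=\sum_i\lambda_i p_i+w$, a Galois descent argument (applying $\sigma\in\Aut(\C/K)$ and using uniqueness of the decomposition) then yields $\lambda_i\in K$ and $w$ defined over $K$.

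To finish, I would set $\Phi=\sum_i\lambda_i\mathcal{M}_i+h_w$, where $h_w$ is a weakly holomorphic form with principal part $w$ and coefficients in $K$. In weight $1/2$ such an $h_w$ is unique and automatically defined over $K$ because $M_{1/2,\bar{\rho}}\cong J_{1,N}=\{0\}$; in weight $3/2$ one obtains $h_w$ over $K$ from the $\Q$-rationality of $M^{!}_{3/2,\bar{\rho}}$. By construction $\Phi$ has the same principal part as $f$ and maps into the unary theta functions, so $f-\Phi$ has vanishing principal part and, by Lemma~\ref{CuspFormLemma}, is a cusp form $f'\in S_{k,\bar{\rho}}$. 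In weight $1/2$ we have $S_{1/2,\bar{\rho}}\cong J^{\mathrm{cusp}}_{1,N}=\{0\}$, so $f=\Phi$ and $f^{+}=\Phi^{+}$ has coefficients in $K$, proving~(1). In weight $3/2$ the cusp form $f'$ survives, and $f^{+}-f'=\Phi^{+}$ has coefficients in $K$, proving~(2).

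The main obstacle is the rationality step $\lambda_i\in K$. It rests on two ingredients that must be set up carefully: the $\Q$-rationality of the explicit $\xi$-preimages (Theorem~\ref{ThetaLifts} combined with Theorem~\ref{RationalHolomorphicPart}), and a $\Q$-structure on the space of weakly holomorphic vector valued forms for $\bar{\rho}$ together with the $\Aut(\C/\Q)$-stability of $W$, i.e.\ a rationality result for the Weil representation of McGraw type. The vanishing $S_{1/2,\bar{\rho}}=\{0\}$ is what makes weight $1/2$ forms rigid enough to pin down $f$ exactly, whereas the non-vanishing of $S_{3/2,\bar{\rho}}$ is precisely the source of the unavoidable cusp form correction $f'$ in part~(2).
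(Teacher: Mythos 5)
Your proposal is correct and follows essentially the same route as the paper: explicit rational $\xi$-preimages coming from Theorem~\ref{ThetaLifts} and Theorem~\ref{RationalHolomorphicPart}, a rational (McGraw) basis of the weakly holomorphic space, a rationality argument on principal parts, and finally Lemma~\ref{CuspFormLemma} together with $S_{1/2,\bar{\rho}}\cong J_{1,N}^{\mathrm{cusp}}=\{0\}$ to identify $f$ with the rational model. The only cosmetic difference is that you package the key rationality step as Galois descent on a direct-sum decomposition of principal parts, whereas the paper simply notes that the linear system obtained by comparing principal parts is defined over $K$ and solvable over $\C$, hence solvable over $K$.
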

	
	\begin{Remark}
		\begin{enumerate}
		\item The corresponding statement for the spaces $H_{1/2,\rho}$ and $H_{3/2,\rho}$ is also true, but a little less interesting. Since there are no unary theta functions for $\bar{\rho}$, it just says that a weakly holomorphic modular form of weight $1/2$ or $3/2$ for $\rho$, whose principal part is defined over $K$, has coefficients in $K$ up to addition of a cusp form. This follows immediatly from the fact that the spaces $M^{!}_{1/2,\rho}$ and $M^{!}_{3/2,\rho}$ have bases with rational coefficients (see \cite{McGraw}).
			\item We have formulated the theorem using vector valued modular forms for the Weil representation since it best suits our applications. In view of Remark \ref{SerreStarkRemark} the theorem implies a rationality result for the $\xi$-preimages of the scalar valued theta series $\theta_{\psi,m}(\tau)$ as well.
			\item The holomorphic projection method developed in \cite{IRR} yields a recursive formula for the coefficients of the holomorphic part of $f$. It should also be possible to prove the above theorem using this technique.
		\end{enumerate}
	\end{Remark}
	
	\begin{proof}[Proof of Theorem \ref{AlgebraicCoefficients}]
		We only prove the first claim, since the second one is similar. Let $f \in H_{1/2,\bar{\rho}}(K)$ and suppose that $\xi_{1/2}f$ lies in the space of unary theta functions. By Theorem~\ref{ThetaLifts}, there is some $h \in H_{1/2,\bar{\rho}}$, which is a linear combination of harmonic Maass forms $h_{j}$ with rational holomorphic parts, such that $\xi_{1/2}f = \xi_{1/2}h$, i.e., $f -h$ is weakly holomorphic. We can write $f - h$ as a linear combination of forms $g_{i}$ with rational coefficients (see \cite{McGraw}). Having $f$ written in terms of the $g_{i}$ and $h_{j}$, we consider the system of linear equations obtained from comparing the principal parts. It is defined over $K$ and has a solution in $\C$, so we can also solve it over $K$. Thereby we obtain a harmonic Maass form $\tilde{f}$ that has the same principal part as $f$ and still maps to the space of unary theta functions under $\xi_{1/2}$, but is now a linear combination of the $g_{i}$ and $h_{j}$ over $K$. In particular, the coefficients of $\tilde{f}^{+}$ lie in $K$. Then $f -\tilde{f}$ is a harmonic Maass forms which has vanishing principal part and maps to the space of unary theta function under $\xi_{1/2}$. By Lemma~\ref{CuspFormLemma}, this implies that $f-\tilde{f}$ is a cusp form. But $S_{1/2,\bar{\rho}} \cong J_{1,N}^{\text{cusp}}= \{0\}$, so $f = \tilde{f}$, and thus $f^{+}$ has coefficients in $K$.
	\end{proof}

	\section{Regularized inner products and Weyl vectors of Borcherds products}

	The harmonic Maass forms constructed in Theorem~\ref{ThetaLifts} can be used to evaluate the regularized Petersson inner product of the unary theta functions $\theta_{1/2}$ and $\theta_{3/2}$ with harmonic Maass forms whose shadows are cusp forms.

	\begin{Theorem}\label{InnerProductFormulas}
	Let $F(z) = \sum_{n \gg -\infty}a(n)q^{n}\in M_{0}^{!,\infty}(N)$ be as in Lemma~\ref{ConstructionModularFunction}.
		\begin{enumerate}
			\item Let $f \in H_{1/2,\rho}$ with holomorphic coefficients $c_{f}^{+}(D,r)$, where $D \equiv r^{2}(4N)$, and suppose that $\xi_{1/2}f \in S_{3/2,\bar{\rho}}$. Then 
			\begin{align*}
			-\frac{\sqrt{N}}{4\pi}a(0)(f,\theta_{1/2})^{\reg} &= \sum_{r(2N)}\sum_{\substack{D < 0\\ D \equiv r^{2}(4N)}}c_{f}^{+}(D,r)\left( \tr_{F}^{+}(D,r) + \tr_{F}^{-}(D,r)\right) \\ 
			& \quad + 4c_{f}^{+}(0,0)\sum_{n\geq 0}a(-n)\sigma_{1}(n) -2\sum_{b > 0}c_{f}^{+}(b^{2},b)b\sum_{n > 0}a(-bn).
			\end{align*}
			\item Let $f \in H_{3/2,\rho}$ with holomorphic coefficients $c_{f}^{+}(D,r)$, where $D \equiv r^{2}(4N)$, and suppose that $\xi_{3/2}f \in S_{1/2,\bar{\rho}}$. Then
			\begin{align*}
			-\frac{1}{2\sqrt{N}}a(0)(f,\theta_{3/2})^{\reg} &= \sum_{r(2N)}\sum_{\substack{D < 0 \\ D \equiv r^{2}(4N)}}c^{+}_{f}(D,r)\frac{i}{\sqrt{|D|}}\left( \tr_{F}^{+}(D,r) - \tr_{F}^{-}(D,r)\right) \\
			& \quad + 2\sum_{b>0}c_{f}^{+}(b^{2},b)\sum_{n>0}a(-bn).
			\end{align*}
		\end{enumerate}
	\end{Theorem}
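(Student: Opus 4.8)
The plan is to realize both theta functions as $\xi$-images of the lifts from Theorem~\ref{ThetaLifts} and then reduce to the coefficient pairing \eqref{InnerProductEvaluation}. Since $a(0)\in\R$, Theorem~\ref{ThetaLifts} gives $\theta_{1/2} = -\tfrac{4\pi}{\sqrt{N}a(0)}\xi_{3/2}\IKM(F)$ and $\theta_{3/2} = -\tfrac{2\sqrt{N}}{a(0)}\xi_{1/2}\IM(F)$, so the two assertions become
\[
(f,\xi_{3/2}\IKM(F))^{\reg} = \sum_{r(2N)}\sum_{D\equiv r^{2}(4N)}c_{f}^{+}(D,r)\,c_{\IKM(F)}^{+}(-D,r)
\]
and the analogous identity with $\IM(F)$; inserting the explicit coefficients from Theorem~\ref{ThetaLifts} into the right-hand sides produces exactly the stated trace expressions.

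I would first dispose of part (2): the hypothesis $\xi_{3/2}f\in S_{1/2,\bar{\rho}}$ together with $S_{1/2,\bar{\rho}}\subseteq M_{1/2,\bar{\rho}}\cong J_{1,N}=\{0\}$ forces $\xi_{3/2}f=0$, so $f$ is weakly holomorphic and \eqref{InnerProductEvaluation} applies verbatim. Part (1) is the substantial case, since $S_{3/2,\bar{\rho}}$ may be nonzero. Here I would run the Stokes' theorem argument underlying \eqref{InnerProductEvaluation} with the $\Gamma_{0}(N)$-invariant $1$-form $\beta=\big(\sum_{r}f_{r}\IKM(F)_{r}\big)d\tau$, whose differential satisfies
\[
d\beta = -\langle \IKM(F),\xi_{1/2}f\rangle v^{3/2}\,d\mu - \langle f,\xi_{3/2}\IKM(F)\rangle v^{1/2}\,d\mu .
\]
Integrating over the truncated domain $\mathcal{F}_{T}$ and letting $T\to\infty$, the boundary integral contracts to the constant term of $\sum_{r}f_{r}\IKM(F)_{r}$ at $\infty$; because $\xi_{1/2}f$ is cuspidal the non-holomorphic part $f^{-}$ decays, and a short estimate (the cross terms carry incomplete Gamma factors tending to $0$ as $v\to\infty$) shows that the only surviving contribution is the pairing $\sum_{r,D}c_{f}^{+}(D,r)\,c_{\IKM(F)}^{+}(-D,r)$ of the two holomorphic parts.

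The main obstacle is the extra term $(\IKM(F),\xi_{1/2}f)^{\reg}$ produced by $d\beta$, which is absent in \eqref{InnerProductEvaluation} only because there $f$ is holomorphic, so $\xi_{1/2}f=0$. I would show that this term vanishes, and this is precisely where cuspidality is essential. Since $\xi_{1/2}f\in S_{3/2,\bar{\rho}}$ decays rapidly, integration in $u$ annihilates the growth of the principal part of $\IKM(F)$, so $(\IKM(F),\xi_{1/2}f)^{\reg}$ is given by a convergent integral, and unfolding the defining theta integral yields
\[
(\IKM(F),\xi_{1/2}f)^{\reg} = \int_{\Gamma_{0}(N)\setminus\H}F(z)\,\overline{\Phi(z)}\,\frac{dx\,dy}{y^{2}},
\]
where $\Phi$ is the adjoint (weight $0$) Kudla-Millson lift of $\xi_{1/2}f$. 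As $\xi_{1/2}f$ is holomorphic, the intertwining of the weight $3/2$ and weight $0$ Laplacians under the theta kernel makes $\Phi$ a weight $0$ harmonic function on the modular curve, and its cuspidality forces $\Phi\equiv 0$; hence the extra term drops out. (The vanishing is equivalent to the orthogonality of the Kudla-Millson lift to cusp forms, which is the true content of this step.)

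Finally I would carry out the bookkeeping. Substituting the Fourier coefficients of $\IKM(F)^{+}$ from Theorem~\ref{ThetaLifts}, the terms with $D<0$ give $\tr_{F}^{+}(D,r)+\tr_{F}^{-}(D,r)$, the term $D=0$ (nonzero only for $r\equiv 0$) gives the $\sigma_{1}$-sum weighted by $c_{f}^{+}(0,0)$, and the principal-part coefficients at $D=b^{2}$ give the $b$-sums; summing the two components $r=\pm b$ and invoking the parity $c_{f}^{+}(b^{2},-b)=c_{f}^{+}(b^{2},b)$ (respectively $-c_{f}^{+}(b^{2},b)$ in the weight $3/2$ case) produces the factor $2$. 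This yields the formula of part (1), and the identical computation with $\IM(F)$ in place of $\IKM(F)$ yields part (2).
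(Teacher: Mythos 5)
Your argument is correct and follows essentially the same route as the paper: Stokes' theorem reduces $(f,\theta_{1/2})^{\reg}$ to the constant-term pairing of the holomorphic parts plus the extra term $(\IKM(F,\tau),\xi_{1/2}f)^{\reg}$, which vanishes by the orthogonality of the Kudla--Millson and Millson lifts to cusp forms --- a fact the paper simply imports from \cite{alfes}, Theorem~5.1, rather than re-deriving via the unfolded adjoint lift as you sketch --- and the remaining bookkeeping with the coefficients from Theorem~\ref{ThetaLifts} is identical. Your observation that $S_{1/2,\bar{\rho}}=\{0\}$ makes part (2) an immediate instance of \eqref{InnerProductEvaluation} is a valid small shortcut not spelled out in the paper.
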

	
	\begin{proof}
		We show the formula for $\theta_{1/2}$. Using Stokes' theorem, we see as in the proof of \cite{BruinierFunke04}, Proposition~3.5, that
		\begin{align*}
		(f,\theta_{1/2})^{\reg} =  -(\IKM(F,\tau),\xi_{1/2}f)^{\reg} + \lim_{T \to \infty}\int_{-1/2}^{1/2}\langle f(u+iT),\overline{\IKM(F,u + iT)}\rangle du.
		\end{align*}
		One can show as in \cite{alfes}, Theorem~5.1, that $\IKM(F,\tau)$ and $\IM(F,\tau)$ are orthogonal to cusp forms, i.e., $(\IKM(F,\tau),\xi_{1/2}f)^{\reg} = 0$. The integral on the right-hand side picks out the zero-coefficient, to which only the holomorphic part of $f$ contributes. Hence we obtain a formula for $(f,\theta_{1/2})^{\reg}$ of the shape \eqref{InnerProductEvaluation}, involving only the coefficients of $f^{+}$. Plugging in the coefficients of $\IKM(F,z)$ from Theorem~\ref{ThetaLifts} yields the result.
	\end{proof}

	\begin{Example}\label{ThetaNorms}
		As a simple application of the last result, we show that the Petersson norms of $\theta_{1/2}$ and $\theta_{3/2}$ are given by
		\begin{align*}
		(\theta_{1/2},\theta_{1/2}) = \frac{\pi(N+1)}{3\sqrt{N}} \qquad \text{and} \qquad ( \theta_{3/2},\theta_{3/2}) = \frac{\sqrt{N}(N-1)}{6}.
		\end{align*}
		 These can of course also be evaluated using more direct methods, for instance, the Rankin-Selberg $L$-function, but it is interesting to see how the dependency on $F$ in Theorem~\ref{InnerProductFormulas} disappears if we plug in $\theta_{1/2}$ or $\theta_{3/2}$ for $f$.
		
		We only show the formula for $\theta_{1/2}$, since the proof for $\theta_{3/2}$ is very similar. We can assume $a(0) = 1$. Let 
			\begin{align*}
		E_{2}^{*}(z) = 1-24\sum_{n=1}^{\infty}\sigma_{1}(n)e(nz)-\frac{\pi}{3y}, \qquad \bigg(\sigma_{1}(n) = \sum_{d \mid N}d\bigg),
		\end{align*}
		be the non-holomorphic Eisenstein series of weight $2$ for $\SL_{2}(\Z)$. Then $E_{2}^{*}(z)-NE_{2}^{*}(Nz)$ is a holomorphic modular form of weight $2$ for $\Gamma_{0}(N)$, and by applying the residue theorem to $F(z)(E_{2}^{*}(z)-NE_{2}^{*}(Nz))dz$, we find that $F$ satisfies
			\begin{align}\label{ResidueFormula}
			(1-N)-24\sum_{n>0}a(-n)(\sigma_{1}(n)-N\sigma_{1}(n/N)) = 0.
			\end{align}	
		 If we denote by $c_{\theta}(D,r)$ the coefficients of $\theta_{1/2}$, we see that $c_{\theta}(0,0) = 1$, and $c_{\theta}(b^{2},b) $ equals $2$ or $1$ for $b > 0$ depending on whether $b \equiv -b (2N)$ or not. Applying Theorem~\ref{InnerProductFormulas} and using the relation \eqref{ResidueFormula} we obtain
		\begin{align*}
		-\frac{\sqrt{N}}{4\pi}(\theta_{1/2},\theta_{1/2}) &= 4\sum_{n\geq 0}a(-n)\sigma_{1}(n)-4\sum_{\substack{b > 0 \\ b \equiv 0(N)}}b\sum_{n>0}a(-bn) - 2\sum_{\substack{b>0 \\ b \not \equiv 0 (N)}}b\sum_{n>0}a(-bn) \\
		&= 4\sigma_{1}(0) + 2\sum_{n>0}a(-n)(\sigma_{1}(n)-N\sigma_{1}(n/N)) = -\frac{1+N}{12}.
		\end{align*}
		This yields the stated formula.
	\end{Example}

		The formula given in the first item of Theorem~\ref{InnerProductFormulas} has applications in the theory of Borcherds products, see \cite{Borcherds}.  We follow the exposition of \cite{BruinierOnoHeegnerDivisors}. Let $f \in H_{1/2,\rho}$ be a harmonic Maass form of weight $1/2$ for $\rho$ whose shadow is a cusp form, and assume that $c_{f}^{+}(D,r) \in \R$ for all $D$ and $c_{f}^{+}(D,r) \in \Z$ for $D \leq 0$. Then the infinite product
		\begin{align*}
		\Psi(z,f) = e(\rho_{f,\infty}z)\prod_{n =1}^{\infty}(1-e(nz))^{c_{f}^{+}(n^{2},n)}
		\end{align*}
		is a meromorphic modular form of weight $c_{f}^{+}(0,0)$ for $\Gamma_{0}(N)$ and a unitary character, possibly of infinite order (see Theorems~6.1 and 6.2 in \cite{BruinierOnoHeegnerDivisors}). Here $\rho_{f,\infty}$ is the so-called \emph{Weyl vector} at $\infty$, which is defined by
		\[
		\rho_{f,\infty} = \frac{\sqrt{N}}{8\pi}(f,\theta_{1/2})^{\reg}.
		\]
		The Bocherds product $\Psi(z,f)$ has singularities at Heegner points in $\H$, which are prescribed by the principal part of $f$, and its orders at the cusps are determined by the corresponding Weyl vectors, which we describe now. 

		Each cusp of $\Gamma_{0}(N)$ can be represented by a reduced fraction $a/c$ with $c \mid N$, and the Weyl vector corresponding to $a/c$ is defined by
		\begin{align}\label{WeylVectorAtCusp}
		\rho_{f,a/c} = \frac{\sqrt{N}}{8\pi}(f,\theta_{1/2,N/(c,N/c)^{2}}^{\sigma_{c/(c,N/c)}}|U_{(c,N/c)})^{\reg},
		\end{align}
		where $\sigma_{c/(c,N/c)}$ is the Atkin-Lehner involution corresponding to the exact divisor $c/(c,N/c)$ of $N/(c,N/c)^{2}$ as in \eqref{AtkinLehnerInvolution}, and $U_{(c,N/c)}$ is the operator \eqref{UdOperator}. Note that the Weyl vector at $a/c$ does not depend on $a$. Further, Theorem~\ref{InnerProductFormulas} yields a formula for the Weyl vector at each cusp $a/c$, involving only the principal part of $f$ and the coefficients $c_{f}^{+}(b^{2},r)$ for $b > 0$ and $r \in \Z/2N\Z$ with $r^{2}\equiv b^{2}(4N)$. Thus, we obtain the following rationality result.
		
		\begin{Corollary}\label{WeylVectorsRationality}
			 Let $f \in H_{1/2,\rho}$ be a harmonic Maass form with $\xi_{1/2}f \in S_{3/2,\bar{\rho}}$. Suppose that $c_{f}^{+}(D,r) \in \R$ for all $D$ and that $c_{f}^{+}(D,r) \in \Z$ for $D \leq 0$. If $c_{f}^{+}(b^{2},r) \in \Q$ for all $b > 0$ and all possible $r \in \Z/2N\Z$, then the Weyl vectors $\rho_{f,a/c}$ at all cusps are rational.
		\end{Corollary}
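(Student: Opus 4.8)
The plan is to reduce the rationality of every Weyl vector to the explicit evaluation provided by Theorem~\ref{InnerProductFormulas}, applied at an auxiliary level. By \eqref{WeylVectorAtCusp}, the Weyl vector at a cusp $a/c$ equals $\tfrac{\sqrt N}{8\pi}(f,\vartheta)^{\reg}$, where $\vartheta=\theta_{1/2,N/d^{2}}^{\sigma_{c/d}}|U_{d}$ with $d=(c,N/c)$ is a unary theta function of weight $1/2$ for $\rho$. For the cusp $\infty$ we have $d=1$ and $\vartheta=\theta_{1/2,N}$, so Theorem~\ref{InnerProductFormulas}(1) applies directly, and all quantities appearing in that formula are rational by the inputs discussed below. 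The remaining task is to treat a general cusp.

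First I would produce a convenient $\xi$-preimage of $\vartheta$. Using Lemma~\ref{ConstructionModularFunction} at level $N'=N/d^{2}$, I choose $F'\in M_0^{!,\infty}(N')$ with rational Fourier coefficients $a'(n)$ and $a'(0)\neq 0$, and set $G=\IKM(F',\tau)^{\sigma_{c/d}}|U_{d}$. Since the operators $\sigma_{c/d}$ and $U_{d}$ commute with $\xi$ and raise the level from $N'$ to $N'd^{2}=N$, Theorem~\ref{ThetaLifts}(1) gives $\xi_{3/2}G=-\tfrac{\sqrt{N'}}{4\pi}a'(0)\,\vartheta$. Thus $G\in H_{3/2,\bar{\rho}}$ is, up to a real scalar, a $\xi$-preimage of $\vartheta$, and, because $\sigma_{c/d}$ and $U_{d}$ merely permute and redistribute the components, its holomorphic coefficients are, up to reindexing, the trace sums $\tr_{F'}^{+}(D,r)+\tr_{F'}^{-}(D,r)$ together with the Fourier coefficients $a'(n)$.

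Next I would repeat the Stokes' theorem computation from the proof of Theorem~\ref{InnerProductFormulas}, now with $\vartheta$ and $G$ in place of $\theta_{1/2}$ and $\IKM(F,\tau)$. As $G$ is built from a Kudla--Millson lift, it is orthogonal to cusp forms, so the term $(G,\xi_{1/2}f)^{\reg}$ vanishes (using $\xi_{1/2}f\in S_{3/2,\bar{\rho}}$), and the remaining boundary integral isolates the $q^{0}$-coefficient of $\langle f,\overline{G}\rangle$. This yields a formula for $(f,\vartheta)^{\reg}$ of the shape \eqref{InnerProductEvaluation}, pairing the coefficients $c_f^{+}(D,r)$ of $f$ with $D\le 0$ against the trace sums, the constant $c_f^{+}(0,0)$ against $\sum_n a'(-n)\sigma_1(n)$, and the square coefficients $c_f^{+}(b^{2},r)$ against partial sums of the $a'(-bn)$. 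Multiplying by $\tfrac{\sqrt N}{8\pi}$ and using $\sqrt N/\sqrt{N'}=d$, the analytic factors $\pi$ and $\sqrt N$ cancel and leave the rational prefactor $-\tfrac{d}{2a'(0)}$. Collecting the rationality inputs then finishes the argument: by hypothesis $c_f^{+}(D,r)\in\Z$ for $D\le 0$ and $c_f^{+}(b^{2},r)\in\Q$ for $b>0$; the coefficients $a'(n)$ are rational by Lemma~\ref{ConstructionModularFunction}; and the trace sums $\tr_{F'}^{+}(D,r)+\tr_{F'}^{-}(D,r)$ are rational by the argument of Theorem~\ref{RationalHolomorphicPart}, since $F'$ has rational coefficients at $\infty$. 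Hence every term of the finite sum is rational, and so is $\rho_{f,a/c}$.

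I expect the only real obstacle to lie in the second and third steps: one must verify carefully that $G=\IKM(F',\tau)^{\sigma_{c/d}}|U_{d}$ is again a harmonic Maass form for $\bar{\rho}$ at level $N$ with $\xi$-image a scalar multiple of $\vartheta$, and, crucially, that it remains orthogonal to the space $S_{3/2,\bar{\rho}}$ of cusp forms at level $N$ after applying the Atkin--Lehner involution and the level-raising operator $U_{d}$. This last point requires passing the operators onto the cusp form via their adjoints (which again land in spaces of cusp forms) and invoking the orthogonality of $\IKM(F',\tau)$ to cusp forms at level $N'$; everything else is bookkeeping of Fourier coefficients and of the explicit constants.
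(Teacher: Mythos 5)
Your proof is correct and follows essentially the paper's route: the corollary is deduced by running the Stokes argument of Theorem~\ref{InnerProductFormulas} against a Kudla--Millson $\xi$-preimage of the cusp-dependent theta function and then invoking the rationality of the coefficients of $F'$ (Lemma~\ref{ConstructionModularFunction}) and of the trace sums (Theorem~\ref{RationalHolomorphicPart}). The only (harmless) difference is that you transport $\sigma_{c/d}$ and $U_{d}$ onto the preimage $\IKM(F',\tau)$, whereas the paper transports them onto $f$ (compare Corollary~\ref{WeylVectors}, where $(f,\theta_{1/2}^{\sigma_{c}})^{\reg}=(f^{\sigma_{c}},\theta_{1/2})^{\reg}$); the two are equivalent by adjointness, and your version correctly identifies and handles the one point the paper leaves implicit, namely that orthogonality to cusp forms survives the application of $\sigma_{c/d}$ and $U_{d}$.
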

		
		The formula for the Weyl vector $\rho_{f,a/c}$ obtained from Theorem \ref{InnerProductFormulas} looks quite complicated in general. Thus, for simplicity, we only state it in the special case of a cusp $a/c$ with $c \mid \mid N$ and $(a,c) = 1$. Then $\theta_{1/2,N/(c,N/c)^{2}}^{\sigma_{c/(c,N/c)}}|U_{(c,N/c)} = \theta_{1/2,N}^{\sigma_{c}}$, and Theorem \ref{InnerProductFormulas} gives the following formula.		
		
		\begin{Corollary}\label{WeylVectors}
			Let $f \in H_{1/2,\rho}$ be a harmonic Maass form with $\xi_{1/2}f \in S_{3/2,\bar{\rho}}$. Suppose that $c_{f}^{+}(D,r) \in \R$ for all $D$ and that $c_{f}^{+}(D,r) \in \Z$ for $D \leq 0$. Let $c\mid \mid N$ and let $\sigma_{c}$ be the associated Atkin-Lehner involution as in \eqref{AtkinLehnerInvolution}. Let $F \in M_{0}^{!,\infty}(N)$ be as in Lemma \ref{ConstructionModularFunction}, normalized to $a(0) = 1$. Then the Weyl vector $\rho_{f,a/c}$ at the cusp $a/c$ is given by
			\begin{align*}
			\rho_{f,a/c}= \frac{\sqrt{N}}{8\pi}(f^{\sigma_{c}},\theta_{1/2})^{\reg} &= -\frac{1}{2}\sum_{r(2N)}\sum_{\substack{D < 0\\ D \equiv r^{2}(4N)}}c_{f}^{+}(D,\sigma_{c}(r))\left( \tr_{F}^{+}(D,r) + \tr_{F}^{-}(D,r)\right) \\ 
			& \quad - 2c_{f}^{+}(0,0)\sum_{n\geq 0}a(-n)\sigma_{1}(n) + \sum_{b > 0}c_{f}^{+}(b^{2},\sigma_{c}(b))b\sum_{n > 0}a(-bn).
			\end{align*}
		\end{Corollary}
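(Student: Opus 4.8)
The plan is to reduce everything to Theorem~\ref{InnerProductFormulas}(1), applied not to $f$ itself but to the Atkin-Lehner twist $f^{\sigma_c}$. First I would specialize the general definition \eqref{WeylVectorAtCusp} to the case at hand. Since $c\mid\mid N$ we have $(c,N/c)=1$, so that $N/(c,N/c)^{2}=N$, $c/(c,N/c)=c$, and $U_{(c,N/c)}$ is the identity operator. Hence $\theta_{1/2,N/(c,N/c)^{2}}^{\sigma_{c/(c,N/c)}}|U_{(c,N/c)} = \theta_{1/2}^{\sigma_c}$, and the Weyl vector collapses to $\rho_{f,a/c} = \frac{\sqrt N}{8\pi}(f,\theta_{1/2}^{\sigma_c})^{\reg}$, as already observed in the text preceding the statement.

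Next I would establish the first asserted equality $\rho_{f,a/c} = \frac{\sqrt N}{8\pi}(f^{\sigma_c},\theta_{1/2})^{\reg}$ by a self-adjointness property of $\sigma_c$ with respect to the regularized pairing. Because $\sigma_c$ merely permutes the components of a vector valued form and is an involution, one has $\langle f(\tau),\theta_{1/2}^{\sigma_c}(\tau)\rangle = \sum_r f_r(\tau)\overline{(\theta_{1/2})_{\sigma_c(r)}(\tau)}$, and reindexing the summation by $r\mapsto\sigma_c(r)$ turns this into $\langle f^{\sigma_c}(\tau),\theta_{1/2}(\tau)\rangle$ pointwise on $\H$. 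Since this identity holds before integration, it persists for the truncated integrals defining $(\cdot,\cdot)^{\reg}$, so the two regularized inner products agree.

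I would then verify that $f^{\sigma_c}$ satisfies the hypotheses of Theorem~\ref{InnerProductFormulas}(1). The Atkin-Lehner involutions are automorphisms of the underlying discriminant form and hence intertwine $\rho$ with itself, so $f^{\sigma_c}\in H_{1/2,\rho}$; moreover $\sigma_c$ commutes with $\xi_{1/2}$ (as recorded in the preliminaries), whence $\xi_{1/2}(f^{\sigma_c}) = (\xi_{1/2}f)^{\sigma_c}\in S_{3/2,\bar{\rho}}$. The holomorphic coefficients transform according to $c^{+}_{f^{\sigma_c}}(D,r) = c^{+}_f(D,\sigma_c(r))$, and since $\sigma_c(0)=0$ we have in particular $c^{+}_{f^{\sigma_c}}(0,0)=c^{+}_f(0,0)$.

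Finally I would apply Theorem~\ref{InnerProductFormulas}(1) to $f^{\sigma_c}$, insert the coefficient relation $c^{+}_{f^{\sigma_c}}(D,r)=c^{+}_f(D,\sigma_c(r))$ into each of its three terms, use the normalization $a(0)=1$, and multiply through by $-\tfrac12$, using $\frac{\sqrt N}{8\pi} = -\tfrac12\bigl(-\frac{\sqrt N}{4\pi}\bigr)$. This produces exactly the three summands of the displayed formula, with the stated signs. The only step demanding genuine care is the self-adjointness in the second paragraph: one must check that the reindexing $r\mapsto\sigma_c(r)$ is legitimate at the level of the truncated integrals rather than only formally, so that the regularization itself is unaffected. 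Everything else is bookkeeping of signs and of the index shift, and no new analytic input beyond Theorem~\ref{InnerProductFormulas} is required.
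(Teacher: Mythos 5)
Your proposal is correct and follows essentially the same route as the paper, which likewise reduces the statement to the specialization $\theta_{1/2,N/(c,N/c)^{2}}^{\sigma_{c/(c,N/c)}}|U_{(c,N/c)}=\theta_{1/2,N}^{\sigma_{c}}$ for $c\mid\mid N$ and then invokes Theorem~\ref{InnerProductFormulas}(1) via the adjointness $(f,\theta_{1/2}^{\sigma_c})^{\reg}=(f^{\sigma_c},\theta_{1/2})^{\reg}$. You merely make explicit the bookkeeping (the pointwise reindexing, the coefficient relation $c^{+}_{f^{\sigma_c}}(D,r)=c^{+}_f(D,\sigma_c(r))$, and the factor $-\tfrac12$) that the paper leaves to the reader.
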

		
		\begin{Remark}
			\begin{enumerate}
				\item If $N$ is square free, the cusps of $\Gamma_{0}(N)$ are represented by the fractions $1/c$, where $c$ runs through the divisors of $N$. In this case all Weyl vectors can be computed with the above formula.
	\item In \cite{Borcherds}, Section 9, the Weyl vectors are computed in a similar way, using non-holomorphic Eisenstein series of weight $3/2$ as $\xi$-preimages for $\theta_{1/2}$. However, this only works if $N = 1$ or if $N = p$ is a prime. Otherwise, the Eisenstein series, and thus also its $\xi$-image, is invariant under all Atkin-Lehner involutions, but $\theta_{1/2}$ is not.
			\end{enumerate}
		\end{Remark}
		
		\begin{Example} We consider the Borcherds lift of $f = \theta_{1/2}$, for $N$ arbitrary. By Example~\ref{ThetaNorms}, the Weyl vector of $\theta_{1/2}$ at $\infty$ equals $(1+N)/24$, so its Borcherds product is given by $\Psi(z,\theta_{1/2}) = \eta(z)\eta(Nz)$. By a similar computation as in Example~\ref{ThetaNorms} we find 
		\[
		(\theta_{1/2}^{\sigma_{c}},\theta_{1/2}) =\frac{\pi}{3\sqrt{N}}\left(\frac{N}{c} + c\right)
		\]
		for $c \mid \mid N$. Hence the Weyl vector of $\Psi(z,f)$ at a cusp $a/c$ with $c \mid \mid N$ is given by $\frac{1}{24}\frac{N}{c}\big(1 + \frac{c^{2}}{N} \big)$.
		\end{Example}
		
		Finally, we would like to mention that the harmonic Maass form $\IM(F,\tau)$ given in Theorem~\ref{ThetaLifts} can be used to construct rational functions on $X_{0}(N)$ with special divisors. Let $\Delta \neq 1$ be a fundamental discriminant and let $r \in \Z$ with $\Delta \equiv r^{2}(4N)$. Further, let $\tilde{\rho} = \rho$ if $\Delta > 1$ and $\tilde{\rho} = \bar{\rho}$ if $\Delta < 0$. The twisted Borcherds product of a harmonic Maass form $f \in H_{1/2,\tilde{\rho}}$ with real holomorphic part and integral principal part is defined by
		\[
		\Psi_{\Delta,r}(z,f) = \prod_{n =1}^{\infty}\prod_{b(\Delta)}[1-e(b/\Delta)e(nz)]^{\left(\frac{\Delta}{b}\right)c_{f}^{+}(|\Delta|n^{2},rn)},
		\]
		see \cite{BruinierOnoHeegnerDivisors}, Theorem~6.1. Note that the Weyl vectors vanish for $\Delta \neq 1$. The function $\Psi_{\Delta,r}(z,f)$ is a meromorphic modular form of weight $0$ for $\Gamma_{0}(N)$ and a unitary character, which is of finite order if and only if the coefficients $c_{f}^{+}(|\Delta|n^{2},rn)$ are rational (see Theorem~6.2 in \cite{BruinierOnoHeegnerDivisors}). 
		
		If $F \in M_{0}^{!,\infty}(N)$ is as in Lemma~\ref{ConstructionModularFunction} and has integral principal part, then the Millson lift $\IM(F,\tau)$ given in Theorem~\ref{ThetaLifts} is a harmonic Maass form in $H_{1/2,\bar{\rho}}$ with rational holomorphic part and integral principal part. In particular, for $\Delta < 0$, some power of the twisted Borcherds product $\Psi_{\Delta,r}(z,\IM(F,\tau))$ defines a rational function on $X_{0}(N)$ whose zeros and poles lie on a twisted Heegner divisor.

\bibliography{references}{}

\begin{thebibliography}{10}

\bibitem{AhlgrenAndersen}
Scott Ahlgren and Nickolas Andersen.
\newblock Algebraic and transcendental formulas for the smallest parts
  function.
\newblock {\em Adv. Math.}, 289:411--437, 2016.

\bibitem{alfes}
Claudia Alfes.
\newblock Formulas for the coefficients of half-integral weight harmonic
  {M}aass forms.
\newblock {\em Math. Z.}, 277(3-4):769--795, 2014.

\bibitem{Alfesdiss}
Claudia Alfes.
\newblock {CM} values and {F}ourier coefficients of harmonic {M}aass forms.
\newblock {\em TU Darmstadt PhD thesis}, 2015.

\bibitem{AGOR}
Claudia Alfes, Michael Griffin, Ken Ono, and Larry Rolen.
\newblock Weierstrass mock modular forms and elliptic curves.
\newblock {\em Research in Number Theory}, 1(1):1--31, 2015.

\bibitem{AlfesSchwagenscheidt}
Claudia Alfes-Neumann and Markus Schwagenscheidt.
\newblock On a theta lift related to the {S}hintani lift.
\newblock {\em preprint, {arXiv:\tt 1605.07054v2 [math.NT]}}, 2016.

\bibitem{Borcherds}
Richard~E. Borcherds.
\newblock Automorphic forms with singularities on {G}rassmannians.
\newblock {\em Invent. Math.}, 132(3):491--562, 1998.

\bibitem{borcherdsDuke}
Richard~E. Borcherds.
\newblock The {G}ross-{K}ohnen-{Z}agier theorem in higher dimensions.
\newblock {\em Duke Math. J.}, 97(2):219--233, 04 1999.

\bibitem{BringmannFolsomOno}
Kathrin Bringmann, Amanda Folsom, and Ken Ono.
\newblock $q$-series and weight $3/2$ {M}aass forms.
\newblock {\em Compos. Math.}, 145:541--552, 5 2009.

\bibitem{BringmannOno}
Kathrin {Bringmann} and Ken {Ono}.
\newblock {Dyson's ranks and Maass forms.}
\newblock {\em {Ann. Math. (2)}}, 171(1):419--449, 2010.

\bibitem{BruinierFunke04}
Jan~H. Bruinier and Jens Funke.
\newblock On two geometric theta lifts.
\newblock {\em Duke Math. J.}, 125(1):45--90, 2004.

\bibitem{BruinierFunke06}
Jan~H. Bruinier and Jens Funke.
\newblock Traces of {CM} values of modular functions.
\newblock {\em J. Reine Angew. Math.}, 594:1--33, 2006.

\bibitem{BruinierOnoHeegnerDivisors}
Jan~H. Bruinier and Ken Ono.
\newblock Heegner divisors, {$L$}-functions and harmonic weak {M}aass forms.
\newblock {\em Ann. of Math. (2)}, 172(3):2135--2181, 2010.

\bibitem{BruinierOnoAlgebraicFormulas}
Jan~H. Bruinier and Ken Ono.
\newblock Algebraic formulas for the coefficients of half-integral weight
  harmonic weak {M}aass forms.
\newblock {\em Adv. Math.}, 246:198--219, 2013.

\bibitem{Sage}
The~Sage Developers.
\newblock {\em {S}ageMath, the {S}age {M}athematics {S}oftware {S}ystem
  ({V}ersion 5.2.-OSX-64bit-10.6)}, 2011.
\newblock {\tt http://www.sagemath.org}.

\bibitem{EichlerZagier}
Martin Eichler and Don Zagier.
\newblock {\em The theory of {J}acobi forms}, volume~55 of {\em Progress in
  Mathematics}.
\newblock Birkh\"auser Boston Inc., Boston, MA, 1985.

\bibitem{Gross}
Benedict~H. Gross.
\newblock Heegner points on {$X_{0}(N)$}.
\newblock In {\em Modular forms (Durham, 1983)}. Horwood, Chichester, 1984.

\bibitem{IRR}
{\"O}zlem Imamoglu, Martin Raum, and Olav~K. Richter.
\newblock Holomorphic projections and {R}amanujan's mock theta functions.
\newblock {\em Proc. Natl. Acad. Sci. USA}, 111(11):3961--3967, 2014.

\bibitem{McGraw}
William~J. McGraw.
\newblock The rationality of vector valued modular forms associated with the
  {W}eil representation.
\newblock {\em Math. Ann.}, 326(1):105--122, 2003.

\bibitem{ScheithauerModularforms}
Nils~R. Scheithauer.
\newblock Some constructions of modular forms for the {W}eil representation of
  $\text{SL}_{2}(\mathbb{Z})$.
\newblock {\em Nagoya Math. J.}, 220:1--43, 2015.

\bibitem{SerreStark}
Jean-Pierre Serre and Harold~M. Stark.
\newblock Modular forms of weight 1/2.
\newblock In {\em Modular Functions of One Variable VI (Proc. Second Internat.
  Conf., Univ. Bonn, Bonn, 1976)}, pages 27--67. Springer Berlin Heidelberg,
  1977.

\bibitem{SkoruppaZagier}
Nils-Peter Skoruppa and Don Zagier.
\newblock {Jacobi forms and a certain space of modular forms}.
\newblock {\em Invent. Math.}, 94:113--146, 1988.

\bibitem{ZagierEisensteinSeries}
Don Zagier.
\newblock Nombres de classes et formes modulaires de poids $3/2$.
\newblock {\em C.R. Acad. Sci. Paris (A)}, 281:883--886, 1975.

\bibitem{ZagierTraces}
Don Zagier.
\newblock Traces of singular moduli.
\newblock In {\em Motives, polylogarithms and {H}odge theory, {P}art {I}
  ({I}rvine, {CA}, 1998)}, volume~3 of {\em Int. Press Lect. Ser.}, pages
  211--244. Int. Press, Somerville, MA, 2002.

\bibitem{ZwegersPaper}
Sander Zwegers.
\newblock Mock $\vartheta$-functions and real analytic modular forms.
\newblock {\em q-series with applications to combinatorics, number theory, and
  physics (Ed. B. C. Berndt and K. Ono), Contemp. Math.}, 291:269--277, 2001.

\bibitem{Zwegers}
Sander Zwegers.
\newblock Mock theta functions.
\newblock {\em Utrecht PhD thesis}, 2002.

\end{thebibliography}
\bibliographystyle{plain}

\end{document}